\newtheorem{thm}{Theorem}
\newtheorem{lem}{Lemma}
\theoremstyle{definition}
\newtheorem{defn}{Definition}
\newtheorem{conj}{Conjecture}
\renewcommand{\Re}{\mathbb R}
\newcommand{\BB}{\mathbf B}
\newcommand{\M}{\mathcal{M}}
\newcommand{\F}{\mathcal{F}}
\newcommand{\PP}{\mathcal{P}}
\newcommand{\X}{\mathcal{X}}
\DeclareMathOperator{\Id}{Id}
\DeclareMathOperator{\conv}{conv}
\DeclareMathOperator{\proj}{proj}
\DeclareMathOperator{\perim}{perim}
\DeclareMathOperator{\area}{area}
\DeclareMathOperator{\skel}{skel}
\DeclareMathOperator{\surf}{surf}
\DeclareMathOperator{\vol}{vol}
\begin{document}

\title[On edge density]{On the edge densities of normal, convex mosaics}

\author[M. Kadlicsk\'o]{M\'at\'e Kadlicsk\'o}
\author[Z. L\'angi]{Zsolt L\'angi}
\author[S. Lyu]{Shanxiang Lyu}

\address{M\'at\'e Kadlicsk\'o, Independent researcher, Budapest, Hungary,\\
M\H uegyetem rkp. 3, H-1111 Budapest, Hungary}
\email{kadlicsko.mate@gmail.com}
\address{Zsolt L\'angi, Bolyai Institute, University of Szeged, Aradi v\'ertan\'uk tere 1, H-6720 Szeged, Hungary, and\\
Alfr\'ed R\'enyi Institute of Mathematics, Re\'altanoda utca 13-15, H-1053, Budapest, Hungary}
\email{zlangi@server.math.u-szeged.hu}
\address{Shanxiang Lyu, College of Cybersecurity, Jinan University, Guangzhou 510632, China}
\email{lsx07@jnu.edu.cn}

\thanks{Partially supported by the National Research, Development and Innovation Office, NKFI, K-147544 grant, the ERC Advanced Grant “ERMiD”, the Project no. TKP2021-NVA-09 with the support provided by the Ministry of Innovation and Technology of Hungary from the National Research, Development and Innovation Fund and financed under the TKP2021-NVA funding scheme, the National Natural Science Foundation of China under Grant 62571218, and the Guangdong Basic and Applied Basic Research Foundation under Grant 2024A1515030089.}

\subjclass[2020]{52C22, 52A40, 52A38}
\keywords{Honeycomb Conjecture, Kelvin's conjecture, edge density, translative mosaic, decomposable mosaic, parallelohedron, total edge length, Melzak's conjecture}

\begin{abstract}
In this paper we investigate the problem of finding the minimum edge density in families of convex, normal mosaics with unit volume cells in $n$-dimensional Euclidean space. In the first part of the paper we solve this problem for mosaics whose cells are Minkowski sums of cells of $1$ or $2$-dimensional mosaics. We show that while for $n=2$ this minimum is attained by a mosaic with regular hexagon cells, this is not true in any dimension $n > 2$, where the minimum is attained by a mosaic whose cells are Minkowski sums of pairwise orthogonal regular triangles, and possibly a segment. In the second part we investigate $3$-dimensional convex mosaics whose cells are translates of a given convex polyhedron, and show that within this family, mosaics with cubes as cells have minimum edge density. In addition, using our method, in the family of $3$-dimensional convex polyhedra whose translates tile the space,  we find the unit volume polyhedra with minimal total edge length.
\end{abstract}

\maketitle

\section{Introduction}

The aim of this paper is to investigate certain properties of mosaics. A \emph{mosaic} or \emph{tiling} of the Euclidean $n$-space $\Re^n$ is a countable family $\M$ of compact sets, called cells, with the property that $\bigcup \M = \Re^n$, and the interiors of any two distinct cells are disjoint. A mosaic is \emph{convex}, if every cell in it is convex; it is known that in this case every cell is a convex polytope. If, for a mosaic $\M$, there are universal constants $0 < r < R$ such that the inradius of every cell of $\M$ is at least $r$, and its circumradius is at most $R$, the mosaic is called \emph{normal}. In this paper we deal only with normal, convex mosaics. A convex mosaic is called \emph{face-to-face} (in the plane also \emph{edge-to-edge}) if every facet of every cell is a facet of exactly one more cell. Furthermore, a convex mosaic is \emph{translative} (resp. \emph{congruent}), if every cell is a translate (resp. a congruent copy) of a given convex polytope. Clearly, every congruent and translative convex mosaic is normal. A cell of a translative convex mosaic in $\Re^n$ is called an \emph{$n$-dimensional parallelohedron}.

One of the classical problems of geometry, regarding planar mosaics, is the Honeycomb Conjecture, stating that in a decomposition of the Euclidean plane into cells of equal area, the regular hexagonal grid has minimal edge density. This conjecture first appeared in Roman times in a book of Varro about agriculture \cite{Varro}. The problem has been the center of research throughout the second half of the 20th century \cite{LFT, Morgan}; it was solved by Fejes T\'oth for normal, convex mosaics, while the most general version is due to Hales \cite{Hales1}, who dropped the condition of convexity.

A famous analogous problem for mosaics in the $3$-dimensional Euclidean space $\Re^3$ was proposed by Lord Kelvin \cite{Kelvin} in 1887; he conjectured that in a tiling of space with cells of unit volume, minimal surface density is attained by a mosaic composed of slightly modified truncated octahedra. Even though this conjecture was disproved by Weaire and Phelan in 1994 \cite{Weaire}, who discovered a tiling of space with two slightly curved `polyhedra' of equal volume and with less surface density than in Lord Kelvin's mosaic, the original problem of finding the mosaics with equal volume cells and minimal surface density has been extensively studied (see, e.g. \cite{Kusner, Gabbrielli, Oudet}). On the other hand, in the authors' knowledge, there is no subfamily of mosaics for which Kelvin's problem is solved. Here, in particular, we should recall a yet unsolved conjecture of Bezdek \cite{Bezdek} from 2006, restated in \cite{Langi} in 2022, that states that among translative, convex mosaics with unit volume cells, the ones generated by regular truncated octahedra have minimal surface density.

In this paper, we investigate a similar problem. The main definition of our paper is the following.

\begin{defn}\label{defn:density}
Let $\M$ be a convex mosaic in $\Re^n$. We define the \emph{upper edge density} of $\M$ as the limit
\begin{equation}\label{eq:updense}
\overline{\rho}_1(\M) = \limsup_{R \to \infty} \frac{L(\skel_1(\M) \cap R \BB^n)}{\vol(R \BB^n)},
\end{equation}
if it exists, where $\BB^n$ is the closed unit ball centered at the origin $o$, the $1$-skeleton $\skel_1(\M)$ of $\M$ is defined as the union of all edges of all cells of $\M$, $L(\cdot)$ denotes length, and $\vol(\cdot)$ denotes volume. Replacing $\limsup$ with $\liminf$ in (\ref{eq:updense}) we obtain the \emph{lower edge density} $\underline{\rho}_1(\M)$ of $\M$. If $\overline{\rho}_1(\M) = \underline{\rho}_1(\M)$, we say that this common value is the \emph{edge density} of $\M$.
\end{defn}

We note that in our definition of the average of a geometric quantity of a tiling in Euclidean space, we follow the classical approach, described e.g. in \cite{Lagerungen} and applied in many discrete geometry papers. For a more general approach, see e.g. \cite{BR02, BR04}. 

It is natural to ask about the minimal edge density of convex mosaics in $\Re^n$. In light of the conjectures of Lord Kelvin and Bezdek in the previous paragraph, answering this question for any dimension $n \geq 3$ seems to be a very difficult problem, well beyond the scope of this paper. Thus, we restrict our investigation to special types of mosaics.

\begin{defn}\label{defn:decomposable}
Let $L_1, L_2, \ldots, L_k$ be linear subspaces of dimensions $n_1, n_2, \ldots, n_k$, respectively, such that $\sum_{i=1}^k n_i = n$ and $\Re^n = \sum_{i=1}^k L_i$.
For $i=1,2\ldots, k$, let $\M_i$ be a convex mosaic in $L_i$. We say that a convex mosaic $\M$ in $\Re^n$ can be \emph{decomposed} into the convex mosaics $\M_1, \M_2, \ldots, \M_k$ (or that $\M$ is the \emph{sum} of $\M_1, \M_2, \ldots, \M_k$), if a set $C$ is a cell of $\M$ if and only if $C= \sum_{i=1}^k C_i$ for some cells $C_i$ of $\M_i$, where $i=1,2, \ldots, k$.
\end{defn}

In the paper, we denote by $\F_{dc}^n$ the family of convex, normal mosaics in $\Re^n$ that can be decomposed into convex mosaics of dimensions at most $2$. Our first result is the following.

\begin{thm}\label{thm:decomp}
Let $n \geq 2$, and let $\M \in \F_{dc}^n$. Then
\begin{equation}
\underline{\rho}_1(\M) \geq
\left\{
\begin{array}{ll}
\sqrt{2 \sqrt{3}}, & \hbox{ if } n=2;\\
\frac{ \sqrt{3 \sqrt{3}} k}{2^{k-1}}, & \hbox{ if } n=2k \hbox{ for some integer } k \geq 2;\\
\frac{3\sqrt{3} k^{2/3}}{2^k}, & \hbox{ if } n=2k+1 \hbox{ for some integer } k \geq 1. 
\end{array}
\right.
\end{equation}
Furthermore, in the above inequality we have equality
\begin{enumerate}
\item[(i)] for $n=2$, if the cells of $\M$ are unit area regular hexagons;
\item[(ii)] for $n=2k$, where $k \geq 2$, if $\M$ is decomposed into $k$ mosaics in mutually orthogonal planes so that the cells in each of them are unit area regular triangles;
\item[(iii)] for $n=2k+1$, where $k \geq 1$, we have equality if $\M$ is decomposed into $k$ planar and $1$ one-dimensional mosaics in mutually orthogonal linear subspaces satisfying the following: the cells of the one-dimensional mosaic are segments of length $l=\sqrt{3}k^{2/3}$, and the cells of the planar mosaics are regular triangles of area $\frac{1}{ 3^{1/(2k)} k^{2/3k}}$.
\end{enumerate}
\end{thm}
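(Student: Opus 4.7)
The plan is to exploit the Minkowski-sum structure of mosaics in $\F_{dc}^n$. I would write $\M = \M_1 + \cdots + \M_k$ with $\M_i \subset L_i$ of dimension $n_i \in \{1,2\}$, and observe that any edge of a cell $C = C_1 + \cdots + C_k$ of $\M$ is of the form $(\text{edge of } C_i) + \sum_{j\neq i}(\text{vertex of } C_j)$ for a unique index $i$. Taking unions over all cells gives the set-theoretic decomposition
\[
\skel_1(\M) \;=\; \bigcup_{i=1}^k \Bigl(\skel_1(\M_i) + \sum_{j\neq i}\skel_0(\M_j)\Bigr).
\]
A length-density computation on a large parallelepiped $B_1+\cdots+B_k$ with $B_i \subset L_i$ then yields the fundamental formula
\[
\rho_1(\M) \;=\; \frac{1}{J}\sum_{i=1}^k e_i \prod_{j\neq i}v_j,
\]
where $e_i$ is edge length per $n_i$-volume in $\skel_1(\M_i)$, $v_j$ is vertex count per $n_j$-volume in $\skel_0(\M_j)$, and $J$ is the Jacobian of the direct-sum map $L_1\oplus\cdots\oplus L_k\to\Re^n$. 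By Hadamard's inequality $J\leq 1$ with equality iff the $L_i$ are pairwise orthogonal. The constant-volume condition on cells of $\M$ forces each $\M_i$ to have constant cell $n_i$-volume $a_i$ with $J\prod a_i=1$, and after substituting $\prod_{j\neq i}a_j = 1/(Ja_i)$ the factor $J$ cancels, so it suffices to treat the orthogonal case $J=1$.

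Next I would apply the standard density identities for normal convex planar mosaics. Euler's formula together with the angle-sum identity $(s-2)\pi$ per convex $s$-gon yields $v_i = (\bar s_i - 2)/(2a_i)$, where $\bar s_i \geq 3$ is the average number of sides per cell. Combining the isoperimetric inequality $p \geq 2\sqrt{s\tan(\pi/s)\,a}$ for a convex $s$-gon of area $a$ with Jensen applied to the convex function $s \mapsto \sqrt{s\tan(\pi/s)}$ on $[3,\infty)$ gives the bound $e_i \geq \sqrt{\bar s_i\tan(\pi/\bar s_i)/a_i}$, with equality iff every cell of $\M_i$ is a regular $\bar s_i$-gon. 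For the $1$-dimensional component $\M_0$ present in the odd case the $1$-skeleton is all of $L_0$, so $e_0=1$ and $v_0=1/l$ for the common segment length $l$, with $l\prod_{i\geq 1}a_i=1$. At this point the problem is reduced to a constrained algebraic minimization in the parameters $(\bar s_i, a_i, l)$.

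For the even case $n=2k$ with $k\geq 2$, I would apply AM--GM to $\sum_i \sqrt{a_i}$ (using $\prod a_i=1$) to collapse the optimization to a single variable, and then show that $\varphi_k(s) = \sqrt{s\tan(\pi/s)}\,(s-2)^{k-1}$ attains its minimum on $[3,\infty)$ at $s=3$. A short logarithmic-derivative computation verifies $\varphi_k'(3)>0$ for all $k\geq 2$, and monotonicity on $(3,\infty)$ follows from explicit asymptotics of $g(s)=\sqrt{s\tan(\pi/s)}$. Substituting $\bar s_i=3$ and $a_i=1$ then yields the bound $\sqrt{3\sqrt{3}}\,k/2^{k-1}$ together with the equality configuration (ii). The odd case $n=2k+1$ is treated analogously: the parallel AM--GM step, applied with the constraint $l\prod a_i=1$, reduces the problem to a single-variable calculus problem in $l$ whose stationary point produces the prescribed segment length and triangle area and delivers the claimed bound, and for $k=1$ one recovers the sharp constant $3\sqrt{3}/2$. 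The planar case $n=2$ is precisely the convex, normal version of the Honeycomb Conjecture (Fejes T\'oth), yielding $\underline{\rho}_1(\M)\geq\sqrt{2\sqrt{3}}$, and is invoked directly from the introduction.

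The main obstacle will be the minimization step. One must rule out interior minimizers of $\varphi_k$ on $(3,\infty)$ and, more importantly, show that an inhomogeneous choice of the $\bar s_i$ across components cannot improve on the uniform triangular configuration; here the separable structure of the reduced objective after AM--GM, together with the per-component fact that $g(s)(s-2)^{k-1}$ is minimized at $s=3$, will do the work, and Jensen's inequality is essential for reducing from tilings with varying side numbers to a single averaged parameter $\bar s_i$. A subsidiary but delicate technical point is verifying that the pieces $\skel_1(\M_i)+\sum_{j\neq i}\skel_0(\M_j)$ contribute essentially disjoint $1$-dimensional measure in the set-theoretic decomposition of $\skel_1(\M)$, so that the additive density formula is valid without hidden double-counting; this is where the transversality of the $L_i$ and the face lattice of Minkowski sums in complementary subspaces enter.
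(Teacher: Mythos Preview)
Your proposal is essentially the paper's own proof: decompose edges as (edge of $\M_i$) + (vertices of the other $\M_j$), reduce to orthogonal components, bound the planar vertex density via Euler/angle-sum and the planar edge density via the isoperimetric inequality plus Jensen on $s\mapsto\sqrt{s\tan(\pi/s)}$, then apply AM--GM to reduce to the one-variable function $\varphi_k(s)=\sqrt{s\tan(\pi/s)}\,(s-2)^{k-1}$. Two small points where the paper is tighter: (i) for orthogonality the paper projects each component onto the orthogonal complement of the others (which only shortens edges), rather than your Jacobian argument, whose ``$J$ cancels'' step is not quite right as stated since the constraint $\prod a_i=1/J$ still carries $J$; (ii) the paper restricts to $\bar s_i\in[3,6]$ (standard for normal convex planar mosaics) and then factors $\varphi_k=(s-2)^{k-2}\cdot\bigl[(s-2)\sqrt{s\tan(\pi/s)}\bigr]$, observing that the bracketed factor is increasing on $[3,6]$, which gives the minimum at $s=3$ for $k\ge 2$ immediately and avoids your incomplete ``$\varphi_k'(3)>0$ plus asymptotics'' argument.
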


The main tool of the proof of Theorem~\ref{thm:decomp} is a proof of the Honeycomb Conjecture for normal, convex mosaics, presented in \cite{LW2025}.

In the second part of the paper we examine the case $n=3$. In this case we find a solution only in the family of translative, convex mosaics. To state our result, we denote the family of translative, convex mosaics in $\Re^3$ by $\F_{tr}^3$. We note that the next natural step, namely to find a solution on the family of congruent, convex mosaics seems significantly more complicated. Here we should note that the list of $3$-dimensional convex parallelohedra was known in the 19th century, whereas even for the very special case of tetrahedra it is not known which of them can be the cells of a congruent, convex mosaic of $\Re^3$. For more information on the last problem, see e.g. \cite{BDKS, Edmonds, GGHMMPW} or the survey \cite{Senechal}.

\begin{thm}\label{thm:1}
For any $\M \in \F_{tr}^3$,
\[
\underline{\rho_1}(\M) \geq 3,
\]
with equality if and only if $\M$ is a face-to-face mosaic with unit cubes as cells.
\end{thm}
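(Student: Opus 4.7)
The plan is to reduce the problem to a sharp extremal inequality on the total edge length of the cell.

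First, I would argue that $\M$ may be taken face-to-face: in a non-face-to-face translative tiling, edges of adjacent cells are identified less often, which strictly increases the edge density. For a face-to-face lattice tiling by a cell $P$ of unit volume, a direct volume-counting argument yields
\begin{equation}\label{eq:edgedensityff}
\rho_1(\M) = \sum_{e \text{ edge of } P} \frac{L(e)}{n(e)},
\end{equation}
where $n(e)$ is the number of cells sharing (a translate of) the edge $e$. By the Venkov--McMullen belt theorem, every belt of a $3$-dimensional parallelohedron has $4$ or $6$ faces, which forces $n(e) \in \{3,4\}$ at every edge; this can also be checked directly on each of the five Fedorov combinatorial types. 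Hence
\[
\rho_1(\M) \geq \frac{L(P)}{4},
\]
with equality requiring $n(e)=4$ at every edge, which forces $P$ to be a parallelepiped.

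The remaining key ingredient, which corresponds to the auxiliary extremal result advertised in the abstract, is the inequality
\begin{equation}\label{eq:edgeLength}
L(P) \geq 12 \, \vol(P)^{1/3}
\end{equation}
for every $3$-dimensional parallelohedron $P$, with equality iff $P$ is a cube. Since all $3$D parallelohedra are zonotopes, I would represent $P$ through generators $v_1, \ldots, v_m$ and use the formulas $L(P) = \sum_i b_i |v_i|$ and $\vol(P) = \sum_{i<j<k} |\det(v_i, v_j, v_k)|$, where $b_i \in \{4,6\}$ is the belt size in direction $v_i$, and then minimize $L$ subject to the volume constraint type by type. For the parallelepiped case ($m=3$, all $b_i=4$), Hadamard's inequality combined with AM-GM yields $L \geq 12\vol^{1/3}$ sharply at the cube. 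For the hexagonal prism, rhombic dodecahedron, elongated rhombic dodecahedron, and truncated octahedron, problem-specific constrained optimization exploiting the combinatorial constraints on the generators---three coplanar generators for the prism, fixed affine dependences among generators for the rhombic and elongated dodecahedra, and the six-generator structure for the truncated octahedron---establishes strict inequality in each case.

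Combining the pieces gives $\underline{\rho_1}(\M) \geq L(P)/4 \geq 3$, with equality only when $\M$ is face-to-face, $n(e) = 4$ for every edge (so $P$ is a parallelepiped), and $L(P) = 12$ (so $P$ is a unit cube); i.e., only for the face-to-face tiling by unit cubes. The principal obstacle is the extremal inequality \eqref{eq:edgeLength} for the non-parallelepiped Fedorov types: while the parallelepiped case reduces to a clean application of symmetric inequalities, the other four types require delicate case-by-case constrained minimizations with non-symmetric generator constraints, with the truncated octahedron (six generators in $\Re^3$ in general position) likely the most involved.
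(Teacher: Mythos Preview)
Your approach is correct and yields the theorem, but it takes a slightly different (and somewhat roundabout) route from the paper's.

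The paper, like you, establishes the exact face-to-face density formula $\rho_1(\M)=\sum_e L(e)/n(e)$; in the paper's notation this is written as $\rho_1(\M)=w_{(2,1)}(P)$, where $w_{(\alpha_6,\alpha_4)}(P)=\sum_i w_i|S_i|$ is a weighted sum over generating segments with weight $\alpha_6$ on $6$-belt directions and $\alpha_4$ on $4$-belt directions (a $6$-belt edge is shared by $3$ cells and occurs $6$ times, a $4$-belt edge by $4$ cells and occurs $4$ times, giving weights $6/3=2$ and $4/4=1$). The paper then minimizes $w_{(2,1)}(P)$ \emph{directly} over all unit-volume parallelohedra via a general weighted result (their Theorem~4): for any $m=(\alpha_6,\alpha_4)$ with $\alpha_4<\frac{\sqrt{3}}{2}\alpha_6$ the minimum is $3\alpha_4$, attained exactly at the cube.

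Your route instead inserts the crude bound $\rho_1(\M)\ge L(P)/4$ (throwing away the extra weight on $6$-belt directions) and then appeals to $L(P)\ge 12$. That inequality is precisely the paper's Theorem~3, and the paper proves it from the \emph{same} general weighted result, just with $m=(6,4)$ (since $L(P)=w_{(6,4)}(P)$). So your detour does not avoid any of the hard work: the type-by-type constrained minimization you outline is exactly what the proof of Theorem~4 carries out, using a parametrization $P=\sum_{i<j}[o,\beta_{ij}(v_i\times v_j)]$, surface-isotropic normalization within each affine class, and a Cauchy--Schwarz estimate. One point worth flagging: the most delicate type is not the truncated octahedron but the \emph{elongated rhombic dodecahedron} (type~4), because it mixes a single $4$-belt direction with four $6$-belt directions, destroying the symmetry that makes types~3 and~5 reduce cleanly to the mean-width minimization already done in \cite{Langi}. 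The advantage of the paper's packaging is that one argument (Theorem~4 for general $m$) simultaneously delivers Theorem~1, Theorem~3, and the earlier mean-width result; your approach proves Theorem~1 as a corollary of Theorem~3, which is logically fine but gains nothing and costs the sharpness of the intermediate bound.
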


The proof of this theorem is entirely different from that of Theorem~\ref{thm:decomp}, and is based on a method introduced in \cite{Langi}.
This method seems to work only for $3$-dimensional, translative mosaics: to generalize it for other dimensions or for mosaics with less restrictive conditions it seems a new approach is needed. In addition, comparing the result of Theorem~\ref{thm:1} to that of Theorem~\ref{thm:decomp}, one can see that a normal, convex mosaic in $\Re^3$ with minimal edge length cannot be a translative mosaic. Nevertheless, as isoperimetric problems for $3$-dimensional parallelohedra are natural objects of research (see e.g. \cite{Bezdek, Langi}), we believe that Theorem~\ref{thm:1} is interesting on its own.
On the other hand, the method of the proof of Theorem~\ref{thm:1}, which is based on a generalization of the method described in \cite{Langi}, can be used to prove the following theorem, where $L(P)$ denotes the total edge length of the $3$-dimensional convex polyhedron $P$.

\begin{thm}\label{thm:totaledgelength}
For any $3$-dimensional parallelohedron $P$ of unit volume, we have
\[
L(P) \geq 12,
\]
with equality if and only if $P$ is a unit cube.
\end{thm}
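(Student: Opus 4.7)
The plan is to invoke Fedorov's classification: every three-dimensional parallelohedron is affinely equivalent to one of five combinatorial types --- the parallelepiped, the hexagonal prism, the rhombic dodecahedron, the elongated dodecahedron, and the truncated octahedron. All five are zonohedra, so in each case we can parametrize the affine family by the generating vectors $\vec{v}_1,\dots,\vec{v}_k$ and express both $L(P)$ and $\vol(P)$ as explicit functions of these generators, then minimize $L(P)$ subject to $\vol(P)=1$.

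For the parallelepiped $P = \sum_{i=1}^{3}[\vec{0},\vec{v}_i]$ we have $L(P)=4\sum_{i=1}^{3}|\vec{v}_i|$ and $\vol(P)=|\det(\vec{v}_1,\vec{v}_2,\vec{v}_3)|$. Hadamard's inequality gives $\vol(P)\leq\prod_i|\vec{v}_i|$, and AM--GM then gives $\sum_i|\vec{v}_i|\geq 3\vol(P)^{1/3}$, so $L(P)\geq 12$ under $\vol(P)=1$, with equality iff the $\vec{v}_i$ form an orthonormal basis, that is, $P$ is a unit cube.

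For the hexagonal prism, write $P=H+[\vec{0},h\vec{e}]$ with $H$ a centrally symmetric planar hexagon of perimeter $p$ and area $A$, and $\vec{e}$ a unit vector normal to the plane of $H$. Then $L(P)=2p+6h$ and $\vol(P)=Ah$. The planar isoperimetric inequality for hexagons, $A\leq\tfrac{\sqrt{3}}{24}p^2$ (equality at the regular hexagon), together with $\vol(P)=1$, yields $h\geq 8\sqrt{3}/p^2$ and hence $L(P)\geq 2p+48\sqrt{3}/p^2$. Single-variable minimization over $p>0$ gives the strict lower bound $3(48\sqrt{3})^{1/3}>12$.

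For the three remaining types, we use that $P=\sum_{i=1}^{k}[\vec{0},\vec{v}_i]$ is a zonohedron with $k\in\{4,5,6\}$, the Minkowski volume identity $\vol(P)=\sum_{1\leq i<j<l\leq k}|\det(\vec{v}_i,\vec{v}_j,\vec{v}_l)|$, and the combinatorial description that specifies, for each $\vec{v}_i$, the number of edges of $P$ parallel to it (hence a closed expression $L(P)=\sum_i n_i|\vec{v}_i|$ with known integers $n_i$). Bounding each determinant by Hadamard's inequality and applying AM--GM or Maclaurin's inequality to the resulting symmetric expression in $|\vec{v}_1|,\dots,|\vec{v}_k|$ then produces an explicit lower bound on $L(P)$ under $\vol(P)=1$ that, in each type, can be shown to exceed $12$.

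The principal obstacle is the truncated octahedron: its six generators must lie in a highly non-generic configuration (otherwise the zonohedron would have $60$ rather than $36$ edges), so that the naive Hadamard plus symmetric-polynomial bound is not sharp. Adapting the method of \cite{Langi} used in the proof of Theorem~\ref{thm:1}, we handle this by exploiting additional linear relations among the $\vec{v}_i$ imposed by the tiling condition, and by performing a more delicate constrained optimization. The elongated dodecahedron, whose generator count $5$ is also non-generic, calls for an analogous but less involved adjustment; the parallelepiped case then identifies the unique minimizer.
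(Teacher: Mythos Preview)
Your plan is sound and, if carried out, yields a correct proof that is more elementary than the paper's. The paper does \emph{not} argue type by type with Hadamard/AM--GM; instead it proves the much stronger weighted statement Theorem~\ref{thm:2} (for arbitrary $(\alpha_6,\alpha_4)$) via the centered-tetrahedron parametrization, Petty's isotropic-position lemma, a Cauchy--Schwarz trick, and the optimization Lemma~\ref{lem:computations2}, and then simply substitutes $\alpha_6=6$, $\alpha_4=4$. Your route avoids all of that machinery at the cost of not producing the general weighted result.

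Two remarks on execution. First, in the hexagonal-prism case you silently assume a right prism; you should note (as the paper does in Case~2 of Section~\ref{sec:thm2}) that replacing the lateral generator by its component orthogonal to the hexagon plane preserves the volume and can only shorten $L(P)$.

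Second, you overestimate the difficulty of types~(4) and~(5). For type~(5) every zone is a $6$-belt, so $L(P)=6\sum_{i=1}^{6}|\vec v_i|$; bounding $\vol(P)\le e_3(|\vec v_1|,\dots,|\vec v_6|)$ by Hadamard and then Maclaurin gives $L(P)\ge 6\,(216/20)^{1/3}\approx 13.26>12$ with no further work. For type~(4) (one $4$-belt generator $a$ and four $6$-belt generators), bounding $\vol(P)\le e_3$ on five variables and minimizing $4a+6(b+c+d+e)$ under $e_3\ge 1$ by Lagrange multipliers gives the critical point $a=2t$, $b=c=d=e=t$, $16t^3=1$, whence $L(P)\ge 2^{11/3}\approx 12.70>12$. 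So the ``principal obstacle'' you flag does not arise here: the \cite{Langi} machinery is needed to get the \emph{sharp} minima (e.g.\ $18/2^{1/6}$ for type~(5)), but for the bare inequality $L(P)\ge 12$ the naive bound already clears $12$ in every non-parallelepiped type, and your type~(1) argument then identifies the unique equality case.
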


Theorem~\ref{thm:totaledgelength} is a proof of a special case of a conjecture of Melzak \cite{Melzak} made in 1965, claiming that among convex polyhedra of unit volume, regular triangle based prisms, with equal-length edges, have minimal total edge length. This conjecture is still very far from settled; for the solution of this problem in smaller families of convex polyhedra and for bounds on this quantity, see e.g. \cite{Abe63, Abe73, Be57, FejesToth, Melzak2}.

Based on our results, we make the following conjecture.

\begin{conj}\label{conj:edgedensity}
For any normal, convex mosaic $\M$ in $\Re^3$ with unit volume cells, we have
\[
\underline{\rho}_1(\M) \geq \frac{3\sqrt{3}}{2}.
\]
Here, we have equality e.g. if the cells of $\M$ are the regular triangle based right prisms described in Theorem~\ref{thm:decomp}.
\end{conj}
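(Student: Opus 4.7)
The plan is to combine a Cauchy--Crofton slicing argument with the planar Honeycomb inequality for normal convex mosaics from \cite{LW2025} (the same ingredient used to prove Theorem~\ref{thm:decomp}), and then to perform a local refinement at the edges of $\M$. For $\xi\in\Sph^2$ and $t\in\Re$, let $H_\xi(t)=\{x\in\Re^3:\langle x,\xi\rangle=t\}$ and set $\M_\xi(t):=\M\cap H_\xi(t)$; for almost every $(\xi,t)$ this is a normal, convex, planar mosaic whose edges are the intersections of the $2$-faces of $\M$ with $H_\xi(t)$ and whose vertex set is $\skel_1(\M)\cap H_\xi(t)$. For each edge $e$ of $\M$ in direction $\hat e$, the Cauchy--Crofton identity
\[
\int_\Re \#\bigl(e\cap H_\xi(t)\bigr)\,dt=|\langle\hat e,\xi\rangle|\,L(e),
\]
followed by averaging in $\xi\in\Sph^2$, contributes $\tfrac12 L(e)$. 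Summing over all edges inside a large window and passing to densities yields
\[
\underline{\rho}_1(\M)=2\int_{\Sph^2}\underline{\rho}_0\bigl(\M_\xi\bigr)\,\frac{d\sigma(\xi)}{4\pi},
\]
where $\underline{\rho}_0(\M_\xi)$ is the lower density of planar vertices of $\M_\xi(t)$, averaged in $t$.

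First I would apply the planar Honeycomb inequality from \cite{LW2025} to each slice $\M_\xi(t)$, giving $\underline{\rho}_1(\M_\xi(t))\geq\sqrt{2\sqrt3\,\lambda_2(\xi,t)}$, where $\lambda_2(\xi,t)$ is the cell density of the slice. Next I would pass from edge length to vertex count via the Euler-characteristic inequality $V\geq E^{\#}/3$ (valid in every normal planar convex mosaic because each face has at least three edges, where $E^{\#}$ is the edge \emph{count} density of the slice). Third, I would invoke the volume constraint: since $\int_\Re\area(C\cap H_\xi(t))\,dt=\vol(C)=1$ for each cell $C$, Fubini shows that the mean of $\lambda_2(\xi,t)$ in $t$ equals the average width of the cells of $\M$ in direction $\xi$, which in turn admits a lower bound via the classical isoperimetric inequality for mean width. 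Integrating over $\Sph^2$ and combining the three ingredients should produce a universal lower bound for $\underline{\rho}_1(\M)$, and the sharpness case would have to force simultaneous equality in Honeycomb and in the Euler inequality, pointing to triangle-sliced mosaics of Theorem~\ref{thm:decomp}(iii).

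The main obstacle, and the reason the statement is offered as a conjecture, is the passage from slicewise edge \emph{length} to slicewise vertex \emph{count}: this loses a multiplicative factor depending on the typical length of a slice edge, which in a normal mosaic is bounded but not pinned down by the volume constraint alone. The crude combination of the three steps therefore produces a bound that is an order-one constant but strictly weaker than $\tfrac{3\sqrt3}{2}$. To recover the sharp constant one must replace the slicewise Honeycomb inequality by a local inequality at each edge of $\M$: along an edge $e$, at least three facets of incident cells meet at dihedral angles summing to $2\pi$, and one needs an extremal inequality of Fejes-T\'oth type relating $L(e)$ to the covolumes of the incident cells. A further difficulty is that the conjectured extremal mosaic of Theorem~\ref{thm:decomp}(iii) is strongly anisotropic, so spherical averaging alone will wash out the sharp constant unless one tracks the covariance between the mean width $\bar w(\xi)$ of cells and the slicewise Honeycomb defect. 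In our view, formulating and proving such a genuinely three-dimensional local inequality at the edges of $\M$, in the spirit of the planar argument of \cite{LW2025} but now involving dihedral angle data, is where the main technical difficulty lies, and is the reason a complete proof remains beyond the scope of this paper.
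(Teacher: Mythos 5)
The statement you were asked about is Conjecture~\ref{conj:edgedensity}: the paper does not prove it, and offers only the candidate extremal configuration, whose edge density $\tfrac{3\sqrt3}{2}$ is computed from the $n=3$, $k=1$ case of Theorem~\ref{thm:decomp}. So there is no proof in the paper to compare yours against, and your submission, quite properly, does not claim to be one either. Your diagnosis of where the slicing strategy breaks is essentially sound, and it is worth recording precisely why. The Cauchy--Crofton step is correct and converts $\underline{\rho}_1(\M)$ into (twice) the averaged \emph{vertex-count} density of the planar sections $\M_\xi(t)$. But the Honeycomb inequality of \cite{LW2025} controls the \emph{edge-length} density of those sections; integrated over $(\xi,t)$ that quantity is, by the same Crofton reasoning applied to the $2$-faces of $\M$, the surface density of $\M$ --- i.e.\ the Kelvin-type functional, not the edge-density one. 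Passing from section edge length to section vertex count requires dividing by the typical length of a section edge and multiplying by the average valence, and normality bounds these only within a window far too loose to recover $\tfrac{3\sqrt3}{2}$. Two further leaks you should note: averaging $\sqrt{2\sqrt3\,\lambda_2(\xi,t)}$ over $t$ and $\xi$ runs into Jensen in the wrong direction (the square root is concave, so the mean of $\lambda_2$ only gives an \emph{upper} bound on the mean of its square root), and the spherical averaging indeed cannot be tight for the conjectured anisotropic extremizer, since $|\langle\hat e,\xi\rangle|$ attains its average value $\tfrac12$ on a measure-zero set of directions while the prism mosaic has only finitely many edge directions.

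In short: your proposal is a reasonable reconnaissance of the problem, it correctly identifies the missing ingredient (a genuinely three-dimensional local inequality at the edges, involving dihedral-angle data, in the spirit of the planar Dowker/Fejes T\'oth machinery), and it reaches the same conclusion the authors implicitly do by labelling the statement a conjecture: no proof is currently available. Had you presented the sketch as a complete argument it would have to be rejected at the edge-length-to-vertex-count step; as an explanation of why the statement remains open, it is consistent with the paper.
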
 

The structure of our paper is as follows. In Section~\ref{sec:decomp} we prove Theorem~\ref{thm:decomp}. The method to prove Theorems~\ref{thm:1} and \ref{thm:totaledgelength} requires more preparation. Thus, we collect the preliminary information for their proofs in Section~\ref{sec:prelim}. In particular, here we state Theorem~\ref{thm:2}, which is a common generalization of the main result, Theorem 1, of \cite{Langi} as well as Theorems~\ref{thm:1} and \ref{thm:totaledgelength}. In Section~\ref{sec:thm1}  we deduce Theorems~\ref{thm:1} and \ref{thm:totaledgelength} from Theorem~\ref{thm:2}. Finally, in Section~\ref{sec:thm2} we prove Theorem~\ref{thm:2}.

\section{Proof of Theorem~\ref{thm:decomp}}\label{sec:decomp}
%\section{Remarks and open problems}\label{sec:ques}

Let $\M \in \F_{dc}$. Since the sum of two $1$-dimensional mosaics is a $2$-dimensional mosaic, we may assume that if $n=2k$ is even, then $\M$ is decomposed into $k$ planar mosaics, and if $n=2k+1$ is odd, then $\M$ is decomposed into $k$ planar and one $1$-dimensional mosaics.
%In the proof we need the definition of lower and upper average number of edges of a planar mosaic. We do not define these concepts formally; to do it one can follows the way in Definition~\ref{defn:density} (se also \cite{LW2025}).

Let $A$ be the affine hull of any component $\M_i$ in the decomposition of $\M$, and let $\M_j$ be any other component. Let $\proj_i : \Re^n \to A^{\perp}$ denote the orthogonal projection onto the orthogonal complement $A^{\perp}$ of $A$. Then $\proj_i \M_j$ is a mosaic in a linear subspace of $A^\perp$ of the same dimension as that of $\M_i$. Even more, the sum $\M'$ of the mosaics $\M_i$, and $\proj_i \M_j$, $j \neq i$ is a mosaic in $\F_{dc}$ such that, as the length of a segment does not increase under orthogonal projection, $\underline{\rho}_1 (\M') \leq \underline{\rho}_1(\M)$. In other words, we may assume that the components of $\M$ are contained in mutually orthogonal linear subspaces. We assume that each of these subspaces is the linear hull of some of the standard basis vectors of $\Re^n$.

\emph{Case 1}: $n=2k$ is even.\\
Let $\M$ be the sum of the planar mosaics $\M_1, \M_2, \ldots, \M_k$. By the properties of decomposition and since every cell in $\M$ has unit volume, we may assume that for every $i$, $\M_i$ contains equal area cells. For $i=1,2,\ldots, k$, let $a_i$ denote the area of the cells of $\M_i$. Similarly, since $\M$ is a normal mosaic, it follows that for every $i$, $\M_i$ is a normal mosaic. We permit the cells of $\M_i$ to have straight angles, and in this way we can regard $\M_i$ as an edge-to-edge mosaic.

Let $E$ be an edge of $\M$. Then, there is a cell $P$ of $\M$ such that $E$ is an edge of $P$. Thus, by the definition of face, there is some unit vector $u \in \mathbb{S}^{n-1}$ such that $E= P \cap \{ x \in \Re^n: \langle x, u \rangle = h_P(u) \}$, where $h_P$ denotes the support function of $P$. By the properties of Minkowski sums and support functions, this condition implies that for some $i \in \{ 1,2, \ldots, k \}$, $E = E_i + \sum_{j=1, j\neq i} v_i$, where $E_i$ is an edge of $\M_i$, and $v_j$ is a vertex of $\M_j$. Equivalently, any such set is an edge of $\M$. Furthermore, observe that $\underline{\rho}_1 (\M)$ is the smallest number $\rho$ such that for any $\varepsilon > 0$, $\frac{L(\skel_1(\M) \cap R \BB^n)}{\vol(R \BB^n)} \geq \rho-\varepsilon$ for every sufficiently large value of $R$. Combining these observations, an elementary consideration shows that
\begin{equation}\label{eq:liminf_even}
\underline{\rho}_1 (\M) = \liminf_{R \to \infty} \frac{L\left(\skel_1(\M) \cap \left( R \bigtimes_{i=1}^k \mathbb{B}_i^2 \right) \right)}{\vol\left( R \bigtimes_{i=1}^k \mathbb{B}_i^2 \right))},
\end{equation}
where $\mathbb{B}_i^2$ denotes the closed $2$-dimensional unit disk, centered at the origin $o$, and lying in the plane of $\M_i$.

Now we investigate only the component $\M_i$ for some arbitrary fixed value of $i$; this consideration can be found, in more detail, also in \cite{LW2025}.
Let $s$ and $N_i(R)$ denote the numbers of cells and vertices of $\M_i$ in $R \BB^2_i$, respectively. Let the cells of $\M_i$ in $R \BB^2_i$ be $F_1, F_2, \ldots, F_s$, and let $e_j$ be the number of the edges of $F_j$. Let $S$ denote the sum of all interior angles of all cells of $\M_i$ in $R \BB^2_i$. Then, since $\M_i$ is normal, on one hand
\[
S = 2 \pi N_i(R) + \mathcal{O}(R),
\]
and on the other hand it is
\[
S=\sum_{j=1}^s (e_j-2) \pi + \mathcal{O}(R).
\]
Since $\area(F_j)=a_i$ for all $j$, $s = \frac{R^2 \pi}{a_i} + \mathcal{O}(R)$. This yields that
\[
N_i(R) = \frac{R^2 \pi}{2 a_i} \left( \frac{\sum_{j=1}^s e_j}{s} - 2 \right) + \mathcal{O}(R).
\]
Let $e_i(R) = \frac{1}{s} \sum_{j=1}^s e_j$ denote the average number of edges of the cells of $\M_i$ contained in $R \BB_i^2$. Then
\begin{equation}\label{eq:Ni}
N_i(R) = \frac{R^2 \pi }{2 a_i} \left( e_i(R) - 2 \right) + \mathcal{O}(R).
\end{equation}
Here we recall the well-known fact that for any normal, convex mosaic $\M_i$ in the plane, we have $3 \leq \liminf_{R \to \infty} e_i(R) \leq \limsup_{R \to \infty} e_i(R) \leq 6$.

Next, we examine the length of the $1$-skeleton of $\M_i$. We have
\[
L(\skel_1(\M_i) \cap R \BB^2_i) = \frac{1}{2} \sum_{j=1}^s \perim(F_j) + \mathcal{O}(R).
\]
By the Discrete Isoperimetric Inequality, $\perim(F_j) \geq 2 \sqrt{ e_j \tan \frac{\pi}{e_j} a_i}$; here the right-hand side is the perimeter of the regular $e_j$-gon of area $a_i$. Thus, by the convexity of the function $x \mapsto \sqrt{ x \tan \frac{\pi}{x}}$ on the interval $[3,6]$, we have that
\begin{equation}\label{eq:skeli}
L(\skel_1(\M_i) \cap R \BB^2_i) \geq \frac{R^2 \pi}{a_i} \cdot \sqrt{ e_i(R) \tan \frac{\pi}{e_i(R)} a_i} + \mathcal{O}(R).
\end{equation}

Now we return to the investigation of $\M$. Combining the formulas in (\ref{eq:Ni}) and (\ref{eq:skeli}) with the description of the edges of $\M$, we obtain that   
\[
L\left(\skel_1(\M) \cap \left( R \cdot \bigtimes_{j=1}^k \mathbb{B}_j^2 \right) \right) = \sum_{i=1}^k L(\skel_1(\M_i) \cap R \BB^2_i) \prod_{j=1, j \neq i}^k N_j(R) + \mathcal{O}(R^{2k-1})\geq
\]
\[
\geq \sum_{i=1}^k \frac{R^2 \pi}{a_i} \cdot \sqrt{ e_i(R) \tan \frac{\pi}{e_i(R)} a_i} \prod_{j=1, j \neq i}^k  \frac{R^2 \pi}{2 a_j} \left( e_j(R) - 2 \right) + \mathcal{O}(R^{2k-1}).
\]
Recall that $\prod_{j=1}^k a_j = 1$ by our conditions. Thus,
\[
L\left(\skel_1(\M) \cap \left( R \bigtimes_{j=1}^k \mathbb{B}_j^2 \right) \right) \geq \frac{R^{2k} \pi^k}{2^{k-1}} \sum_{i=1}^k \sqrt{ e_i(R) \tan \frac{\pi}{e_i(R)} a_i} \prod_{j=1, j \neq i}^k \left( e_j(R) - 2 \right) + \mathcal{O}(R^{2k-1}),
\]
implying, by the compactness of the interval [3,6], that
\begin{equation}\label{eq:evenfinalestimate}
\underline{\rho}_1(\M) \geq \frac{1}{2^{k-1}} \sum_{i=1}^k \sqrt{ \hat{e}_i \tan \frac{\pi}{\hat{e}_i} a_i} \prod_{j=1, j \neq i}^k \left( \hat{e}_j - 2 \right)
\end{equation}
for some values $\hat{e}_i \in [3,6]$.
By the AM-GM inequality, from this it follows that
\[
\underline{\rho}_1(\M) \geq \frac{k}{2^{k-1}} \left( \prod_{i=1}^k \sqrt{\hat{e}_i  \tan \frac{\pi}{\hat{e}_i}} \left(\hat{e}_i - 2 \right)^{k-1}   \right)^{1/k}.
\]
Since $\hat{e}_i \in [3,6]$ for all values of $i$, it follows that
\[
\underline{\rho}_1(\M) \geq \frac{k}{2^{k-1}} \inf \left\{ \sqrt{x  \tan \frac{\pi}{x}} \left( x - 2 \right)^{k-1} : x \in [3,6] \right\}.
\]
Now, an elementary computation shows that the function $x \mapsto \sqrt{x  \tan \frac{\pi}{x}}$ strictly decreases whereas the function $x \mapsto (x-2) \sqrt{x  \tan \frac{\pi}{x}}$ strictly increases on $x \in [3,6]$. Thus, $\underline{\rho}_1(\M) \geq \sqrt{2 \sqrt{3}}$ if $k=1$, and $\underline{\rho}_1(\M) \geq \frac{k\sqrt{3 \sqrt{3}}}{2^{k-1}}$ if $k \geq 2$.

\emph{Case 2}: $n=2k+1$ is odd.\\

Let $\M$ be the sum of the planar mosaics $\M_1, \M_2, \ldots, \M_k$, and the $1$-dimensional mosaic $\mathcal{N}$. Similarly like in Case 1, for every $i$, $\M_i$ contains equal area cells, and $\mathcal{N}$ consists of equal length segments. Let the area of the cells of $\M_i$ be denoted by $a_i$, and let $l$ denote the length of the segments of $\mathcal{N}$. Clearly, all the components of $\M$ are normal mosaics. We assume that each $\M_i$ is an edge-to-edge mosaic.

Like in Case 1, we have
\begin{equation}
\underline{\rho}_1 (\M) = \liminf_{R \to \infty} \frac{L\left(\skel_1(\M) \cap \left( R \bigtimes_{j=1}^k \mathbb{B}_j^2 \times [-R,R]\right) \right)}{\vol\left( R \bigtimes_{j=1}^k \mathbb{B}_j^2 \times [-R,R] \right)}.
\end{equation}
Based on this formula and using the notation and approach of Case 1, we obtain that
\begin{multline*}
L\left(\skel_1(\M) \cap \left( R \bigtimes_{j=1}^k \mathbb{B}_j^2 \right) \right) = \sum_{i=1}^k L(\skel_1(\M_i) \cap R \BB^2_i) \prod_{j=1, j \neq i}^k N_j(R) \cdot N(R)+ 
2R \prod_{j=1}^k N_j(R) + \mathcal{O}(R^{2k}),
\end{multline*}
where $N(R)$ denotes the number of vertices of $\mathcal{N}$ in the interval $[-R,R]$. Hence,
\[
L\left(\skel_1(\M) \cap \left( R \bigtimes_{j=1}^k \mathbb{B}_j^2 \right) \right) \geq \sum_{i=1}^k \frac{R^2 \pi}{a_i} \cdot \sqrt{ e_i(R) \tan \frac{\pi}{e_i(R)} a_i} \prod_{j=1, j \neq i}^k  \frac{R^2 \pi}{2 a_j} \left( e_j(R) - 2 \right) \cdot \frac{2R}{l} +
\]
\[
+ 2R \prod_{i=1}^k \frac{R^2 \pi}{2 a_j} \left( e_j(R) - 2 \right) + \mathcal{O}(R^{2k}) = 
\]
\[
= \frac{R^{2k+1} \pi^k}{2^{k-2}} \sum_{i=1}^k \sqrt{e_i(R) \tan \frac{\pi}{e_i(R)} a_i} \prod_{j=1,j\neq i}^k \left( e_j(R)-2\right) +
\frac{R^{2k+1} \pi^k l}{2^{k-1}} \prod_{j=1}^k \left( e_j(R)-2 \right) + \mathcal{O}(R^{2k})
\]
By the compactness of the interval $[3,6]$, this yields that
\[
\underline{\rho}_1(\M) \geq \frac{1}{2^{k-1}} \sum_{i=1}^k \sqrt{\hat{e}_i \tan \frac{\pi}{\hat{e}_i} a_i} \prod_{j=1,j\neq i}^k \left( \hat{e}_j-2\right) +
\frac{l}{2^{k}} \prod_{j=1}^k \left( \hat{e}_j-2 \right)
\]
for some values $\hat{e}_i \in [3,6]$. By the AM-GM Inequality, and using the fact that $\prod_{i=1}^k a_i = \frac{1}{l}$, from this it follows that
\[
\underline{\rho}_1(\M) \geq \frac{k}{2^{k-1}} \left( \prod_{i=1}^k \sqrt{\hat{e}_i \tan \frac{\pi}{\hat{e}_i}} \left( \hat{e}_i-2\right)^{k-1}\right)^{1/k} \cdot \frac{1}{\sqrt{l}}+
\frac{1}{2^k} \prod_{i=1}^k \left( \hat{e}_i-2\right) \cdot l.
\]
Now we show that under the condition that $l$ and $\prod_{i=1}^k \left( \hat{e}_i-2\right)$ are fixed, the function $\prod_{i=1}^k \hat{e}_i \tan \frac{\pi}{\hat{e}_i}$ is minimal if all the $\hat{e}_i$ are equal. Indeed, set $x_i = \ln (\hat{e_i}-2) \in [0, \ln 4]$ for $i=1,2,\ldots,k$ and let $g(x) = \ln \left( (e^x+2) \tan \frac{\pi}{e^x+2}\right)$. With this notation, we need to show that if $\sum_{i=1}^k x_i$ is fixed, then $\sum_{i=1}^k g(x_i)$ is minimal if all the $x_i$ are equal. But a simple computation shows that $g''(x) >0 $ for all $x \in [0, \ln 4]$, and hence the statement follows from the convexity of the function $g$.

Let $t= 2+\left( \prod_{i=1}^k \left( \hat{e}_i-2\right) \right)^{1/k} \in [3,6]$. Then, by our previous consideration, we have
\[
\underline{\rho}_1(\M) \geq \frac{k}{2^{k-1}} \sqrt{t \tan \frac{\pi}{t}} (t-2)^{k-1} \cdot \frac{1}{\sqrt{l}} + \frac{1}{2^k} (t-2)^k \cdot l
\]
for some $l \in (0,1)$ and $t \in [3,6]$. Differentiating (or equivalently, by the AM-GM inequality), we have that for any fixed value of $t$, the right-hand side is minimal if $l=\frac{\left( k \sqrt{t \tan \frac{\pi}{t}} \right)^{2/3}}{(t-2)^{2/3}}$. Substituting it into the above inequality, it follows that
\[
\underline{\rho}_1(\M) \geq \frac{3 k^{2/3}}{2^k} \left( t \tan \frac{\pi}{t} (t-2)^{3k-2} \right)^{1/3}
\]
for some $t \in [3,6]$. But, as we have already concluded in Case 1, the function $t \mapsto t \tan \frac{\pi}{t} (t-2)$ is strictly increasing on the interval $[3,6]$, implying that the right-hand side is minimal for $t=3$.

\section{Preliminaries for the proofs of Theorems~\ref{thm:1} and \ref{thm:totaledgelength}}\label{sec:prelim}

In this section we present a description and parametrization of $3$-dimensional parallelohedra introduced in \cite{Langi}.

It is shown in \cite{Minkowski, Venkov} that an $n$-dimensional convex polytope $P$ is a parallelohedron if and only if
\begin{itemize}
\item[(i)] $P$ and all its facets are centrally symmetric, and
\item[(ii)] the projection of $P$ along any of its $(n-2)$-dimensional faces is a parallelogram or a centrally symmetric hexagon.
\end{itemize}
It is worth noting that any affine image of a parallelohedron is also a parallelohedron, and by \cite{Dolbilin}, for any $n$-dimensional parallelohedron $P$ there is a \emph{face-to-face} tiling of $\Re^n$ with translates of $P$ as cells.

The combinatorial classes of $3$-dimensional parallelohedra were described a long time ago \cite{Fedorov}. More specifically, any 
$3$-dimensional parallelohedron is combinatorially isomorphic to one of the following (see Figure~\ref{fig:types}):
\begin{enumerate}
\item[(1)] a cube,
\item[(2)] a hexagonal prism,
\item[(3)] Kepler's rhombic dodecahedron (which we call a regular rhombic dodecahedron),
\item[(4)] an elongated rhombic dodecahedron,
\item[(5)] a regular truncated octahedron.
\end{enumerate}
We call a parallelohedron combinatorially isomorphic to the polyhedron in (i) a \emph{type (i) parallelohedron}.

\begin{figure}[ht]
\begin{center}
\includegraphics[width=0.6\textwidth]{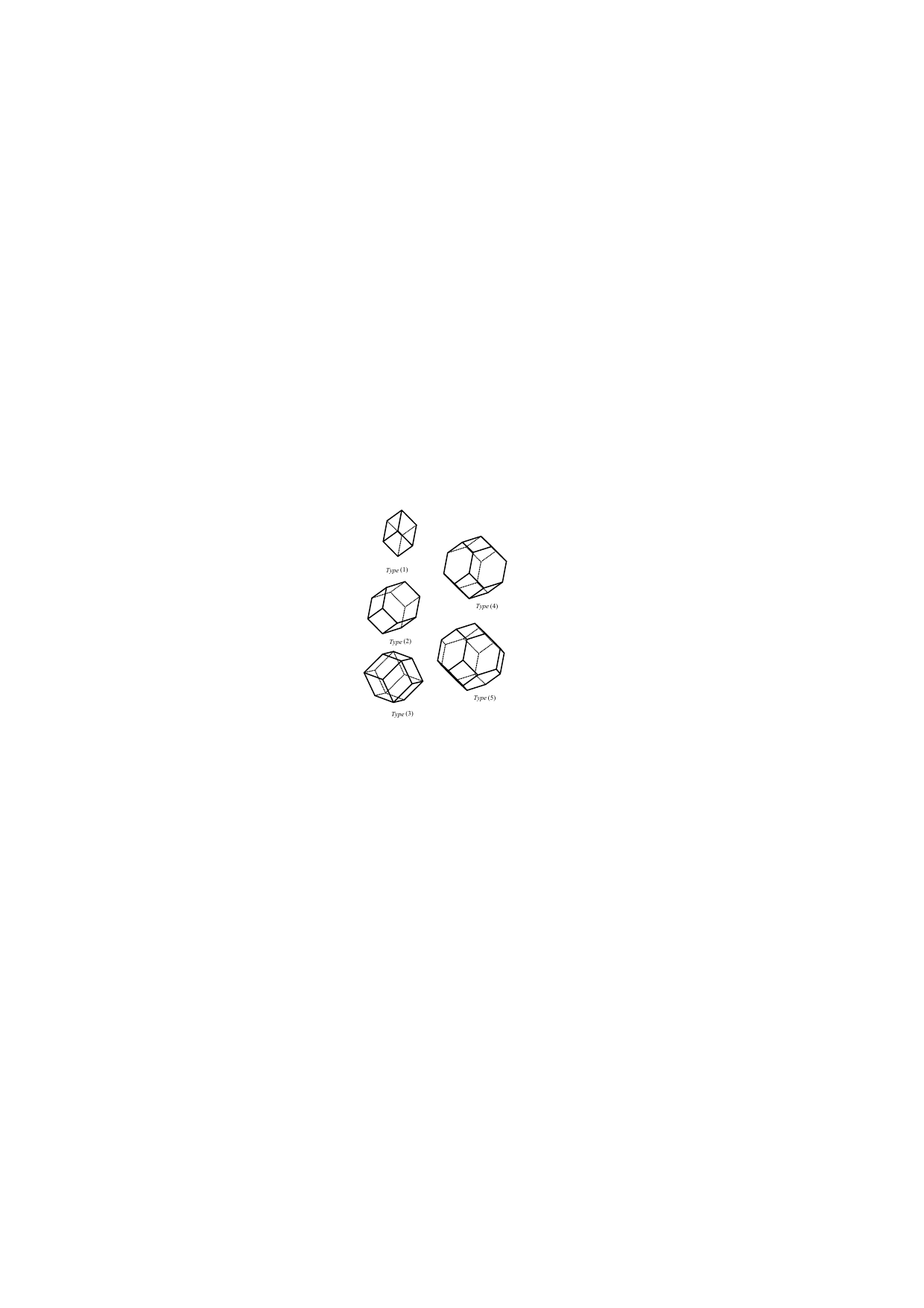}
\caption{Combinatorial types of $3$-dimensional parallelohedra. The type (5) parallelohedron is the regular truncated octahedron generated by the six segments connecting the midpoints of opposite edges of a cube. The type (3) polyhedron is the regular rhombic dodecahedron generated by the four diagonals of the same cube. The other parallelohedra in the picture are obtained by removing some generating segments from the type (5) parallelohedron.}
\label{fig:types}
\end{center}
\end{figure}

Every $3$-dimensional parallelohedron is a zonotope; in particular, a type (1)-(5) parallelohedron is the Minkowski sum of $3,4,4,5,6$ segments, respectively. A typical parallelohedron is of type (5), as every other parallelohedron can be obtained by removing some of the generating vectors of a type (5) parallelohedron, or setting their lengths to zero.  

By well-known properties of zonotopes (see e.g. \cite{JL, Langi, Schneider}), every edge of a $3$-dimensional parallelohedron $P$ is a translate of one of the segments generating $P$. Furthermore, for any generating segment $S$, the faces $F_i$ of $P$ that contain a translate of $S$ can be written in a cyclic sequence $F_1, F_2, \ldots, F_s = F_0$ such that for each $i$, $F_i \cap F_{i+1}$ is a translate of $S$; the union of these faces is called the \emph{zone} of $S$.
By property (ii) of parallelohedra, for any such zone the number of elements is $4$ or $6$, implying that for any generating segment $S$ of $P$ there are $4$ or $6$ translates of $S$ among the edges of $P$. In these cases we say that $S$ generates a \emph{$4$-belt} or a \emph{$6$-belt}, respectively.
For $i=1,2,3,4,5$, a type (i) parallelohedron contains $3, 3, 0, 1, 0$ $4$-belts, and $0,1,4,4,6$ $6$-belts, respectively.

If $P=\sum_{i=1}^k S_i$ is a $3$-dimensional parallelohedron generated by the segments $S_i$, and $m=(\alpha_6,\alpha_4)$ has positive coordinates, we set
\[
w_i = \left\{
\begin{array}{l}
\alpha_6 \hbox{ if } S_i \hbox{ generates a } 6- \hbox{belt;}\\
\alpha_4 \hbox{ if } S_i \hbox{ generates a } 4- \hbox{belt.}
\end{array}
\right.
\]
Furthermore, we define the function $w_{m}(P) = \sum_{i=1}^k w_i L(S_i)$, where $L(S_i)$ denotes the length of $S_i$.
We prove Theorem~\ref{thm:2}, in which $P_{cu}$ denotes a unit cube, $P_{oc}$ denotes a unit volume regular truncated octahedron, and $P_{pr}$ denotes a regular hexagon-based right prism, where the edge length of the hexagon is $\frac{2^{2/3} \alpha_6^{1/3}}{3^{5/6} \alpha_4^{1/3}}$, and the length of the lateral edges of $P_{pr}$ is $\frac{3^{1/6} \alpha_4^{2/3}}{2^{1/3} \alpha_6^{2/3}}$. We note that the volume of $P_{pr}$ is also one.

\begin{thm}\label{thm:2}
Let $m=(\alpha_6,\alpha_4)$ with $\alpha_4, \alpha_6 > 0$. Then $w_m(\cdot)$ attains its minimum on $\PP_{tr}$. Furthermore, if $P \in \PP_{tr}$ minimizes $w_m(\cdot)$ on $\PP_{tr}$, then the following holds.
\begin{itemize}
\item[(a)] If $0 < \alpha_4 < \frac{\sqrt{3}}{2} \alpha_6$, then $P$ is congruent to $P_{cu}$, and $w_m(P) = 3 \alpha_4$.
\item[(b)] If $\alpha_{4} = \frac{\sqrt{3}}{2} \alpha_6 ( \approx 0.866025 \alpha_6)$, then $P$ is congruent to $P_{cu}$ or $P_{pr}$.
\item[(c)] If $\frac{\sqrt{3}}{2} \alpha_6 < \alpha_4 < \sqrt[4]{\frac{2}{3}} \alpha_6$, then $P$ is congruent to $P_{pr}$, and $w_m(P)= \frac{3^{7/6} \alpha_4^{2/3} \alpha_6^{1/3}}{2^{1/3}}$.
\item[(d)] If $\alpha_4 = \sqrt[4]{\frac{2}{3}} \alpha_6 (\approx 0.903202 \alpha_6)$, then $P$ is congruent to $P_{pr}$ or $P_{oc}$.
\item[(e)] If $\sqrt[4]{\frac{2}{3}} \alpha_6 < \alpha_4$, then $P$ is congruent to $P_{oc}$, and $w_m(P)=\frac{3}{2^{1/6}} \alpha_6$.
\end{itemize}
\end{thm}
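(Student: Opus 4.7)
The plan is to work through the five combinatorial types of $3$-dimensional parallelohedra separately, parametrizing each $P$ by its zonotope decomposition $P = \sum_{i=1}^k S_i$ and minimizing $w_m(P)$ subject to $\vol(P)=1$. Writing $l_i = L(S_i)$ and letting $e_i$ be the unit direction of $S_i$, we have $w_m(P) = \sum_i w_i l_i$ with $w_i \in \{\alpha_4,\alpha_6\}$ determined by the belt structure, and the zonotope volume formula gives $\vol(P) = \sum_{\{i,j,k\}} l_i l_j l_k |\det(e_i,e_j,e_k)|$. In each type I would first perform the length-optimization for fixed directions $e_i$ by AM-GM, and then optimize the remaining expression over admissible direction configurations.

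For type (1), the volume sum reduces to a single triple and the combination of AM-GM with Hadamard's inequality gives $w_m(P) = \alpha_4(l_1+l_2+l_3) \geq 3\alpha_4 (l_1 l_2 l_3)^{1/3} \geq 3\alpha_4 \vol(P)^{1/3} = 3\alpha_4$, with equality only for a unit cube. For types (3) and (5), whose $4$ and $6$ generators all lie in $6$-belts, the length step gives $w_m(P) \geq k\alpha_6 (l_1 \cdots l_k)^{1/k}$; the remaining task is to show that among admissible direction configurations the geometric mean of the contributing triple-determinants is maximized at the symmetric one (cube diagonals for (3); segments joining midpoints of opposite edges of a cube for (5)), yielding the values $w_m(P_{rd})$ and $w_m(P_{oc}) = \frac{3}{2^{1/6}}\alpha_6$ respectively. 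A direct numerical comparison shows $w_m(P_{oc}) < w_m(P_{rd})$ for all $\alpha_6>0$, so type (3) never gives the global minimum.

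For the mixed-belt types (2) and (4) I would separate the generators into those in $4$-belts and those in $6$-belts, with total lengths $\lambda$ and $\mu$, so $w_m(P) = \alpha_4\lambda + \alpha_6\mu$. For a hexagonal prism (type (2)), the three $4$-belt generators span a plane and Minkowski-sum to a hexagonal base while the single $6$-belt generator is transverse, giving $\vol(P) = (\text{base area})\cdot h$; since the area of a planar zonotope with three generators of fixed total length is maximized by a regular hexagon, a two-variable AM-GM under $\vol(P)=1$ yields the minimum $\frac{3^{7/6}\alpha_4^{2/3}\alpha_6^{1/3}}{2^{1/3}}$, attained at $P_{pr}$. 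The analogous treatment of type (4) (an elongated rhombic dodecahedron) should yield a value that, after a direct comparison, lies above the minimum of the type (2) and type (5) candidates and hence contributes no new global candidate.

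The final step is to compare the three candidate minimum values $3\alpha_4$, $\frac{3^{7/6}\alpha_4^{2/3}\alpha_6^{1/3}}{2^{1/3}}$, and $\frac{3}{2^{1/6}}\alpha_6$, whose pairwise equalities solve to $\alpha_4 = \frac{\sqrt{3}}{2}\alpha_6$ and $\alpha_4 = \sqrt[4]{2/3}\,\alpha_6$, producing cases (a)--(e) of the theorem. The main obstacle I foresee is the direction-optimization step for type (5): the sum-of-determinants expression is not concave in the $e_i$, so establishing that the symmetric configuration of six directions globally maximizes the relevant geometric mean over all admissible $6$-tuples will likely require either a convexity or symmetrization argument in a carefully chosen parametrization (building on the method of \cite{Langi}), or a perturbative argument that varies one direction at a time and reduces to a lower-dimensional subproblem.
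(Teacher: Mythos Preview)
Your overall architecture---minimize $w_m$ within each of the five combinatorial types and then compare---matches the paper, and your treatments of types (1), (2), (3), and (5) are essentially right. In particular, for (3) and (5) every generator lies in a $6$-belt, so $w_m(P)=\alpha_6\sum_i l_i$ is a positive multiple of the mean width, and the direction-optimization you flag as the ``main obstacle'' is precisely the content of \cite[Theorem~1]{Langi}; the paper simply invokes that result rather than reproving it.

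The genuine gap is type (4), which you dismiss with ``the analogous treatment \ldots\ should yield a value that lies above the minimum of the type (2) and type (5) candidates.'' This is where the real work of the proof lives, and the analogy to type (2) does not go through. A hexagonal prism has a product structure $(\text{hexagon})\times(\text{segment})$ that lets you separate the $4$-belt and $6$-belt contributions cleanly; an elongated rhombic dodecahedron has one $4$-belt generator and four $6$-belt generators with no such factorization, and its volume is the degree-$3$ polynomial $f(\beta_{12},\ldots,\beta_{24},0)$ of~(\ref{eq:volume}) in all five lengths. A naive AM-GM on $\alpha_4\lambda+\alpha_6\mu$ gives no usable bound because the volume constraint does not decouple into a function of $\lambda$ times a function of $\mu$.

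The paper handles type (4) by first passing to the surface-isotropic representative in each affine class (Lemma~\ref{lem:isotropic}, via Petty), which forces the $\beta_{ij}$ to take the form $\beta_{ij}=-\langle v_s,v_t\rangle|v_i\times v_j|\cdot\frac{w_m(P)}{3\alpha_\bullet}$ for a centered tetrahedron $T=\conv\{v_1,\ldots,v_4\}$; this converts the problem to maximizing $f(\mu\bar\beta_{12},\bar\beta_{13},\ldots,\bar\beta_{24},0)$ over all such tetrahedra. That maximum is then bounded by a Cauchy--Schwarz split together with the tetrahedron identities of Lemma~\ref{lem:tetrahedron_identities} and the new computational Lemma~\ref{lem:computations2} (a $\lambda$-weighted variant of \cite[Lemma~5]{Langi}). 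None of this machinery is suggested by your proposal, and without it the type (4) case does not close. You have correctly located the easy cases and the final comparison, but misidentified where the difficulty lies.
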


We note that if $m=\left( \frac{1}{2}, \frac{1}{2} \right)$, then $w_m(P)$ coincides with the mean width of $P$ for any $P \in \PP_{tr}$ \cite[Remark 3]{Langi}, and thus, Theorem~\ref{thm:2} is a generalization of the main result, Theorem 1, of \cite{Langi}, stating that among $3$-dimensional unit volume parallelohedra, regular truncated octahedra have minimal mean width (for the definition of mean width, see e.g. \cite{Gardner, Schneider}).

To prove Theorem~\ref{thm:2} we adapt the representation of $3$-dimensional parallelohedra introduced in \cite{Langi}.
More specifically, if $P$ is a $3$-dimensional parallelohedron, then, up to translation, there are vectors $v_1, v_2,v_3, v_4 \in \Re^3$ such that
\begin{equation}\label{eq:representation}
P = \sum_{1 \leq i < j \leq 4} [o, \beta_{ij}(v_i \times v_j)]
\end{equation}
for some non-negative real numbers $\beta_{ij}$. Here, if $P$ is a type (5) parallelohedron, the vectors $v_i$ are normal vectors of the pairs of hexagon faces.
As it was shown in \cite{Langi}, we may assume that any three of the $v_i$ are linearly independent, and satisfy $v_1+v_2+v_3+v_4=o$. We may also assume that $V_{123}=1$, where $V_{ijk}$ denotes the value of the determinant with columns $v_i, v_j, v_k$. Then, setting $T= \conv \{ v_1, v_2, v_3, v_4 \}$, $T$ is \emph{centered}, i.e. its center of mass is $o$, implying that for any pairwise different indices $i,j,k$, we have $|V_{ijk}| = 1$. The sign of $V_{ijk}$ can be determined from our choice that $V_{123}=1$, the properties of determinants, and from the identity that for any $\{i,j,s,t\} = \{ 1,2,3,4\}$, we have $V_{ijs}=-V_{ijt}$.

Every $3$-dimensional parallelohedron can be obtained from a type (5) parallelohedron by setting some of the $\beta_{ij}$s to be zero. In particular, $P$ is of type (5), or (4), or (3), or (2), or (1) if and only if
\begin{itemize}
\item $\beta_{ij} > 0$ for all $i \neq j$, or
\item exactly one of the $\beta_{ij}$s is zero, or
\item exactly two of the $\beta_{ij}$s is zero: $\beta_{i_1j_1} = \beta_{i_2,j_2} = 0$, and they satisfy $\{ i_1, j_1 \} \cap \{ i_2, j_2 \} = \emptyset$, or
\item exactly two of the $\beta_{ij}$s is zero: $\beta_{i_1j_1} = \beta_{i_2,j_2} = 0$, and they satisfy $\{ i_1, j_1 \} \cap \{ i_2, j_2 \} \neq \emptyset$, or
\item exactly three of the $\beta_{ij}$s are zero, and there is no $s \in \{1,2,3,4\}$ such that the indices of all nonzero $\beta_{ij}$s contain $s$, respectively.
\end{itemize}
In the remaining cases $P$ is planar.

The volume of a parallelohedron $P$ in the above representation is independent of the $v_i$. More specifically,
\begin{equation}\label{eq:vol_rep}
\vol_3(P)= f(\beta_{12},\beta_{13},\beta_{14},\beta_{23},\beta_{24},\beta_{34}),
\end{equation}
where the function $f : \Re^6 \to \Re$ is defined as
\begin{multline}\label{eq:volume}
f(\tau_{12},\tau_{13},\tau_{14},\tau_{23},\tau_{24},\tau_{34}) = \tau_{12} \tau_{13} \tau_{23} + \tau_{12}\tau_{14}\tau_{24} + \tau_{13}\tau_{14}\tau_{34} + \tau_{23} \tau_{24} \tau_{34} +\\
+ (\tau_{12}+\tau_{34})(\tau_{13}\tau_{24}+\tau_{14}\tau_{23})+ (\tau_{13}+\tau_{24})(\tau_{12}\tau_{34}+\tau_{14}\tau_{23}) + (\tau_{14}+\tau_{23})(\tau_{12}\tau_{34}+\tau_{13}\tau_{24}).
\end{multline}
Note that as $f$ has only $16 < 20 = \binom{6}{3}$ members, it is not the third elementary symmetric function on six variables.

Lemma~\ref{lem:tetrahedron_identities} is proved as \cite[Lemma 4]{Langi}.

\begin{lem}\label{lem:tetrahedron_identities}
Let $T=\conv \{ p_1,p_2,p_3,p_4\}$ be an arbitrary centered tetrahedron with volume $V>0$ where the vertices are labelled in such a way that the determinant with columns $p_1,p_2,p_3$ is positive.
For any $\{i,j,s,t\} = \{1,2,3,4\}$, let $\gamma_{ij}=- \langle p_s, p_t \rangle$
and $\zeta_{ij}= \gamma_{ij} | p_i \times p_j|^2$. Then
\begin{enumerate}
\item $f(\gamma_{12},\gamma_{13},\gamma_{14},\gamma_{23},\gamma_{24},\gamma_{34}) = \frac{9}{4}V^2$,
\item $\sum_{1 \leq  i < j \leq 4} \zeta_{ij} = \frac{27}{4}V^2$,
\end{enumerate}
where $f$ is the function defined in (\ref{eq:volume}).
\end{lem}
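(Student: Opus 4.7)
The plan is to verify both identities as polynomial identities in the Gram matrix entries of the tetrahedron. Using the centering condition $p_1 + p_2 + p_3 + p_4 = o$, I would substitute $p_4 = -(p_1 + p_2 + p_3)$ and rewrite every quantity appearing in the lemma as a polynomial in the six Gram entries $\langle p_i, p_j\rangle$, $1 \leq i \leq j \leq 3$. A short determinant expansion starting from $V = \tfrac{1}{6}\det(p_2 - p_1, p_3 - p_1, p_4 - p_1)$ and eliminating $p_4$ gives $V = \tfrac{2}{3}\det(p_1, p_2, p_3)$, so $V^2 = \tfrac{4}{9}\det G$, where $G = (\langle p_i, p_j\rangle)_{i,j=1,2,3}$ is the $3\times 3$ Gram matrix. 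Consequently, the target values are $\tfrac{9}{4}V^2 = \det G$ and $\tfrac{27}{4}V^2 = 3\det G$, both of which are explicit cubic polynomials in the entries of $G$.

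For claim (2), each $\gamma_{ij}$ simplifies via the substitution: when $4 \notin \{i,j\}$, we obtain $\gamma_{ij} = |p_k|^2 + \langle p_k, p_i\rangle + \langle p_k, p_j\rangle$, where $k$ is the third index in $\{1,2,3\}$, while for $i=4$ or $j=4$ the value $\gamma_{ij}$ is a single Gram entry (up to sign). Combined with $|p_i \times p_j|^2 = |p_i|^2|p_j|^2 - \langle p_i, p_j\rangle^2$, the left-hand side of (2) becomes a cubic polynomial in the six Gram entries, which I would expand and match to $3\det G$ term-by-term. The analogous plan for claim (1) is to substitute the six formulas for $\gamma_{ij}$ into the function $f$ from (\ref{eq:volume}), expand, and verify that the result collapses to $\det G$.

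The main obstacle is bookkeeping rather than insight: the naive expansion of $f(\gamma_{12},\ldots,\gamma_{34})$ produces several hundred cubic monomials before cancellation. The sensible way to organize it is to group monomials by their index signature and to exploit the residual $S_4$ symmetry between the four vertices present in both sides, so that many classes of terms are equal by relabelling and the cancellations become systematic. An alternative that largely avoids hand calculation is to observe that both sides are $SO(3)$-invariant polynomials of fixed degree on the finite-dimensional family of centered tetrahedra, so the identity need only be verified on a sufficiently rich parametric subfamily (for instance, the diagonal linear images of a fixed regular tetrahedron, which already has $\gamma_{ij}$ and $|p_i \times p_j|^2$ equal to simple functions of three scaling parameters); this reduces the verification to a single symbolic computation.
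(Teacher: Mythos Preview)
The paper does not prove this lemma; it simply cites \cite[Lemma 4]{Langi}. So there is no in-paper argument to compare against, and your proposal must be judged on its own.

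Your primary route is sound. Eliminating $p_4=-(p_1+p_2+p_3)$ and reducing to the six Gram entries $g_{ij}=\langle p_i,p_j\rangle$, $1\le i\le j\le 3$, is exactly the right normalization: your computation $V=\tfrac{2}{3}\det(p_1,p_2,p_3)$ and hence $\tfrac{9}{4}V^2=\det G$, $\tfrac{27}{4}V^2=3\det G$ is correct, your formulas for the $\gamma_{ij}$ are correct, and both sides of (1) and of (2) are then homogeneous cubic polynomials in the six $g_{ij}$, so term-by-term matching (organized, as you say, via the $S_4$ relabelling symmetry) establishes the identities. For (2) don't forget that $|p_i\times p_4|^2$ also has to be expanded via $p_4=-(p_1+p_2+p_3)$, not just the $\gamma$'s; you didn't say this explicitly, but it is routine.

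Your proposed shortcut, however, does not work as stated. The family of diagonal linear images $\operatorname{diag}(a,b,c)\cdot T_0$ of a fixed regular tetrahedron $T_0$ is only $3$-dimensional, while the space of centered tetrahedra modulo $SO(3)$ (equivalently, of positive-definite $3\times 3$ Gram matrices $G$) is $6$-dimensional. Both sides of each identity are homogeneous cubics in the six $g_{ij}$, a $56$-dimensional linear space; restricting to the diagonal family collapses everything to cubics in the three parameters $a^2,b^2,c^2$, a $10$-dimensional space. Agreement there does not force agreement in general, and invoking the $S_4$ symmetry does not help: a finite group orbit of a $3$-dimensional set is still $3$-dimensional and hence not Zariski-dense. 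If you want a genuine ``verify on a subfamily'' argument, you would need either a $6$-parameter family (i.e.\ a generic $G$, which is just the direct expansion again) or an a priori bound showing the relevant space of $S_4$-invariant cubics has dimension at most the number of independent constraints your test family imposes. Absent such a bound, stick with the direct expansion.
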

 
%\begin{lem}\label{lem:computations}
%Let $C > 0$. Then for each $(\tau_{12},\tau_{13},\tau_{14},\tau_{23},\tau_{24},\tau_{34})\in \mathcal{X}_C$ we have
%\begin{equation}\label{ineq:f}
%f(\tau_{12},\tau_{13},\tau_{14},\tau_{23},\tau_{24},\tau_{34}) \leq \frac{2C^3}{27},
%\end{equation}
%with equality if and only if $\tau_{ij}= \frac{C}{6}$ for all $1 \leq i < j \leq 4$.
%\end{lem}

To prove our result, we need the following variant of \cite[Lemma 5]{Langi}.  Here, for any $C > 0$, we set
$\X_C = \left\{(\tau_{12}, \dots, \tau_{34}) \in \Re^6_{\geq 0}: \, \sum_{1 \leq i < j \leq 4} \tau_{ij} = C\right\}$.

\begin{lem}\label{lem:computations2}
Let $C > 0$ and $\lambda \geq 1$. Then for each $(\tau_{12},\tau_{13},\tau_{14},\tau_{23},\tau_{24},0)\in \mathcal{X}_C$ we have 
\begin{equation}\label{ineq:g}
 f(\lambda \tau_{12},\tau_{13},\tau_{14},\tau_{23},\tau_{24},0) \leq \frac{16C^3 \lambda^3}{27(4\lambda - 1)^2},
\end{equation}
with equality if and only if
\[
\tau_{12} = \frac{4 \lambda - 3}{2\lambda}\tau_{13} = \ldots = \frac{4 \lambda - 3}{2\lambda} \tau_{24} = \frac{2 \lambda}{12 \lambda - 3}C.
\]
\end{lem}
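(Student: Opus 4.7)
The plan is to reduce the multivariable optimization to a single-variable calculus problem via three AM--GM estimates. First, I would substitute $\tau_{34}=0$ and $\lambda\tau_{12}$ for $\tau_{12}$ into the definition of $f$ in (\ref{eq:volume}). The summands $\tau_{13}\tau_{14}\tau_{34}$ and $\tau_{23}\tau_{24}\tau_{34}$ vanish; collecting the remaining monomials according to whether they contain $\tau_{12}$ yields the clean factored form
\begin{equation*}
f(\lambda\tau_{12},\tau_{13},\tau_{14},\tau_{23},\tau_{24},0) = \lambda\tau_{12}\,(\tau_{13}+\tau_{14})(\tau_{23}+\tau_{24}) + \tau_{13}\tau_{14}(\tau_{23}+\tau_{24}) + \tau_{23}\tau_{24}(\tau_{13}+\tau_{14}).
\end{equation*}
Setting $a := \tau_{13}+\tau_{14}$ and $b := \tau_{23}+\tau_{24}$, the constraint becomes $\tau_{12}+a+b = C$.

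Next, I would apply AM--GM three times to collapse the problem to a single variable. Since the right-hand side above is increasing in $\tau_{13}\tau_{14}$ (with $a$ fixed) and in $\tau_{23}\tau_{24}$ (with $b$ fixed), the bounds $\tau_{13}\tau_{14}\leq a^2/4$ and $\tau_{23}\tau_{24}\leq b^2/4$ yield, after substituting $a+b = C-\tau_{12}$,
\begin{equation*}
f \leq ab\left(\lambda\tau_{12} + \tfrac{a+b}{4}\right) = \tfrac{1}{4}\,ab\bigl((4\lambda-1)\tau_{12}+C\bigr).
\end{equation*}
With $\tau_{12}$ fixed, AM--GM once more gives $ab \leq (C-\tau_{12})^2/4$, so
\begin{equation*}
f \leq H(\tau_{12}) := \tfrac{1}{16}(C-\tau_{12})^2\bigl((4\lambda-1)\tau_{12}+C\bigr).
\end{equation*}

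Finally, I would differentiate $H$ and factor to obtain
\begin{equation*}
16\,H'(\tau_{12}) = (C-\tau_{12})\bigl[(4\lambda-3)C - 3(4\lambda-1)\tau_{12}\bigr].
\end{equation*}
The unique interior critical point is $\tau_{12}^* := \frac{(4\lambda-3)C}{3(4\lambda-1)} = \frac{(4\lambda-3)C}{12\lambda-3}$, which lies in $(0,C)$ as soon as $\lambda \geq 1$. The sign of $H'$ shows it is the global maximizer on $[0,C]$; plugging in $C-\tau_{12}^* = \frac{8\lambda C}{3(4\lambda-1)}$ and $(4\lambda-1)\tau_{12}^* + C = \frac{4\lambda C}{3}$ yields the advertised bound $\frac{16 C^3 \lambda^3}{27(4\lambda-1)^2}$. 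Equality in all three AM--GM steps forces $\tau_{13}=\tau_{14}$, $\tau_{23}=\tau_{24}$, and $a=b$, which combined with $\tau_{12}=\tau_{12}^*$ pins down $\tau_{13}=\tau_{14}=\tau_{23}=\tau_{24}=\frac{C-\tau_{12}^*}{4}=\frac{2\lambda}{12\lambda-3}C$ and the ratio $\tau_{12}/\tau_{13} = \frac{4\lambda-3}{2\lambda}$, matching the equality condition in the statement. No serious obstacle is expected; the only delicacy is confirming that $\tau_{12}^*$ beats the boundary value $H(0)=C^3/16$, which is immediate from the sign of $H'$ on $[0,\tau_{12}^*]$.
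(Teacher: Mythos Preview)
Your argument is correct and takes a genuinely different route from the paper. The paper proceeds by compactness and a four-case Lagrange-multiplier analysis (interior critical point, then three boundary strata where one or more of the $\tau_{ij}^*$ vanish), computing the maximum in each stratum and comparing the resulting values $\frac{16\lambda^3}{27(4\lambda-1)^2}$, $\frac{1}{16}$, $\frac{4\lambda^2}{27(4\lambda-1)}$, $\frac{\lambda}{27}$, $\frac{1}{27}$. Your approach instead exploits the factorization
\[
f(\lambda\tau_{12},\tau_{13},\tau_{14},\tau_{23},\tau_{24},0)=\lambda\tau_{12}\,ab+\tau_{13}\tau_{14}\,b+\tau_{23}\tau_{24}\,a
\]
with $a=\tau_{13}+\tau_{14}$, $b=\tau_{23}+\tau_{24}$, and then collapses everything with three AM--GM steps and one derivative. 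This is more elementary and handles all boundary cases at once, since the AM--GM bounds and the one-variable maximization over $[0,C]$ remain valid when some coordinates vanish. The paper's method, by contrast, makes the role of each degenerate configuration explicit, which is informative in the surrounding context (identifying which parallelohedron types can be extremal) but is not needed for the bare inequality. Your equality analysis is also cleaner: equality in the three AM--GM steps forces $\tau_{13}=\tau_{14}=\tau_{23}=\tau_{24}$ directly, whereas the paper extracts these one pair at a time from the partial-derivative equations.
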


\begin{proof}
Without loss of generality, we assume that $C=1$ and for simplicity, in the proof we denote the operator $\frac{\partial}{\partial \tau_{ij}}$ by $\partial_{ij}$.
Given that $\mathcal{X}_C$ is compact, the function $f$ attains its maximum on it at some point $(\tau^*,0) \in \mathcal{X}_C$ with $\tau^*=(\tau^*_{12}, \ldots, \tau_{24}^*)$.

\emph{Case 1}: all coordinates of $\tau^*$ are positive.\\
Then, by the Lagrange multiplier method, the gradient of $f$ at $\tau^*$ is parallel to the gradient of the function $\sum_{i < j, ij \neq 34} \tau_{ij}-1$ defining the condition, thus, all five partial derivatives of $f$ at $\tau^*$ are equal. 
From the equations $(\partial_{13} f)(\tau^*) = (\partial_{14} f)(\tau^*)$ and $(\partial_{23} f)(\tau^*) = (\partial_{24} f)(\tau^*)$ and by the assumption that all coordinates of $\tau^*$ are positive, we obtain that $\tau_{13}^* = \tau_{14}^*$ and $\tau_{23}^* = \tau_{24}^*$. Eliminating $\tau_{14}^*$ and $\tau_{24}^*$ from the partial derivatives, the equation $(\partial_{13} f)(\tau^*) = (\partial_{23} f)(\tau^*)$, and the fact that all $\tau_{ij}^*$ as well as $\lambda$ are positive, yields that $\tau_{13}^*=\tau_{23}^*$. Thus, we need to find the maximum of $f(\lambda \tau_{12},\tau_{13},\ldots,\tau_{13},0)$ under the condition that $\tau_{12} + 4 \tau_{13} = 1$. From this, by simple calculus methods, we obtain that $f(\lambda \tau_{12},\tau_{13},\ldots,\tau_{13},0) \leq \frac{16 \lambda^3}{27(4\lambda - 1)^2}$, with equality if and only if $\tau_{12} = \frac{4 \lambda - 3}{2\lambda}\tau_{13} = \frac{2 \lambda}{12 \lambda - 3}$.

\emph{Case 2}: $\tau_{12}^* = 0$ and all other $\tau_{ij}^*$ are positive.\\
Note that $f(0,\tau_{13},\ldots,\tau_{24},0) = (\tau_{13}+\tau_{24})\tau_{14}\tau_{23}+(\tau_{14}+\tau_{23})\tau_{13}\tau_{24}$. In this case, by the same argument as in Case 1, we obtain that $\tau_{13}^*=\ldots=\tau_{24}^*=\frac{1}{4}$, and $f(\tau^*)=\frac{1}{16}$. 

\emph{Case 3}: $\tau_{ij}^* = 0$ for some $ij \neq 12$, and all other $\tau_{st}^*$ are positive.\\
Without loss of generality, we may assume that $\tau_{24}^* = 0$. By a method similar to the one in Case 1 and using the condition that $\lambda \geq 1$, we obtain that $\tau_{12}^* = \frac{2(2\lambda-1)}{3(4\lambda-1)}$, $\tau_{13}^* = \tau_{14}^* = \frac{2\lambda}{3(4\lambda-1)}$, and $\tau_{23}^* = 0$, and $f(\tau^*)=\frac{4\lambda^2}{27(4\lambda-1)}$.

\emph{Case 4}, at least two $\tau_{ij}^*$ are zero.\\
We may assume that exactly two $\tau_{ij}^*$ are zero, and that there is no $s \in \{1,2,3,4\}$ such that the indices of all nonzero $\tau_{ij}^*$s contain $s$, as otherwise $f(\tau^*,0)=0$. Under this condition, depending on which $\tau_{ij}^*$ are not zero, we have that $f(\tau^*)=\frac{\lambda}{27}$ or $f(\tau^*)=\frac{1}{27}$, with equality if and only if all nonzero $\tau_{ij}^*$ are equal to $\frac{1}{3}$.

As a summary, we need to find
\[
\max \left \{ \frac{16 \lambda^3}{27(4\lambda - 1)^2}, \frac{1}{16}, \frac{4\lambda^2}{27(4\lambda-1)}, \frac{\lambda}{27}, \frac{1}{27} \right\}.
\]
Then an elementary computation shows that under the condition $\lambda \geq 1$, the maximum is $\frac{16 \lambda^3}{27(4\lambda - 1)^2}$, and the assertion follows.
\end{proof}

\section{The proof of Theorems~\ref{thm:1} and \ref{thm:totaledgelength} using Theorem~\ref{thm:2}}\label{sec:thm1}

Setting $\alpha_6=6$ and $\alpha_4=4$ in Theorem~\ref{thm:2}, we readily obtain Theorem~\ref{thm:totaledgelength}.

Theorem~\ref{thm:1} follows from Theorem~\ref{thm:2} by Lemma~\ref{lem:weights}.

\begin{lem}\label{lem:weights}
Let $P$ be a unit volume parallelohedron in $\Re^3$, and let $M$ be a tiling of $\Re^3$ with translates of $P$.
Set $m=(2,1)$. Then $\underline{\rho}_1(\M) \geq w_m(P)$, with equality if $\M$ is face-to-face.
\end{lem}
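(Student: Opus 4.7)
The plan is to lower-bound $L(\skel_1(\M)\cap R\BB^3)$ by decomposing the $1$-skeleton according to the directions of the generating segments $S_1,\dots,S_k$ of $P$ and carrying out an angle-summation argument in each direction. Write $v_i$ for the direction of $S_i$, let $Z_i\subset\skel_1(\M)$ be the union of edges parallel to $v_i$, and set $L_i(R)=L(Z_i\cap R\BB^3)$. Since every edge of every cell is a translate of some $S_i$, we have $L(\skel_1(\M)\cap R\BB^3)=\sum_i L_i(R)$.

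Fix $i$. The key is to compute the total dihedral-angle-weighted length of edges parallel to $v_i$ in two ways. Let $\theta(C,e)$ denote the dihedral angle of a cell $C$ along an edge $e$. Summing cell by cell, the orthogonal projection of $C$ onto $v_i^\perp$ is a centrally symmetric $n_i$-gon with $n_i\in\{4,6\}$ (according to whether $S_i$ generates a $4$- or $6$-belt); its $n_i$ vertices correspond to the $n_i$ edges of $C$ parallel to $v_i$, and its interior angles coincide with the $\theta(C,e)$, so $\sum_{e\parallel v_i}\theta(C,e)=(n_i-2)\pi$. Normality of $\M$ and $\vol(P)=1$ imply $N_R:=|\{C\in\M:C\subset R\BB^3\}|=\vol(R\BB^3)+O(R^2)$, hence
\[
\sum_{C\subset R\BB^3}\sum_{e\parallel v_i}L(S_i)\,\theta(C,e)=N_R\,L(S_i)(n_i-2)\pi+O(R^2).
\]
Summing point by point on $Z_i$, at a.e.\ $x\in Z_i$ the cells having $x$ on one of their edges contribute dihedral angles with $\sum_{C}\theta(C,x)=2\pi-l(x)\pi$, where $l(x)\in\{0,1\}$ counts cells containing $x$ in the relative interior of a $2$-face. (At skeleton points $l\le 1$, since at least one cell contributes a strictly positive dihedral angle.) Swapping the order of summation, the same quantity equals $\int_{Z_i\cap R\BB^3}(2\pi-l(x)\pi)\,dl(x)\le 2\pi\,L_i(R)+O(R^2)$, where $O(R^2)$ absorbs the boundary mismatch between the edges of cells contained in $R\BB^3$ and the set $Z_i\cap R\BB^3$.

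Combining, $L_i(R)\ge L(S_i)(n_i-2)N_R/2+O(R^2)$, and summing over $i$ gives $L(\skel_1(\M)\cap R\BB^3)\ge w_m(P)\,N_R+O(R^2)$, since $(n_i-2)/2=2=\alpha_6$ when $S_i$ generates a $6$-belt and $(n_i-2)/2=1=\alpha_4$ when it generates a $4$-belt, matching the definition of $w_m$. Dividing by $\vol(R\BB^3)$ and taking $\liminf$ yields $\underline{\rho}_1(\M)\ge w_m(P)$. For the equality clause, if $\M$ is face-to-face then $l(x)\equiv 0$ on the relative interior of every edge of $\skel_1(\M)$ (otherwise a neighboring cell would share only a proper subset of a $2$-face, contradicting face-to-face), so the angle inequality is saturated and $\underline{\rho}_1(\M)=w_m(P)$. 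The main obstacle is the careful bookkeeping of the $O(R^2)$ boundary contributions together with the measure-theoretic treatment of lower-dimensional singularities of $\skel_1(\M)$ such as vertices and edges clipped by $\partial(R\BB^3)$; these are routine consequences of normality of $\M$.
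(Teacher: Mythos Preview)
Your argument is correct and takes a genuinely different route from the paper's. The paper projects the tiling along each edge direction $v_i$ onto a plane, observes that the resulting planar tiling by translates of a parallelogram (resp.\ centrally symmetric hexagon) has every vertex incident to exactly $4$ (resp.\ $3$) cells, and from this combinatorial count deduces that each edge of $\M$ is shared by $4$ or $3$ cells; assigning each edge weight $1/k$ and summing cell by cell then yields $w_m(P)$ with $m=(2,1)$. Your approach replaces this discrete edge-sharing count by a continuous dihedral-angle double count: the per-cell identity $\sum_{e\parallel v_i}\theta(C,e)=(n_i-2)\pi$ together with the pointwise bound $\sum_C\theta(C,x)\le 2\pi$ directly gives $L_i(R)\ge \tfrac{n_i-2}{2}L(S_i)N_R+O(R^2)$, reproducing the same weights $\tfrac{n_i-2}{2}\in\{1,2\}$.

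The two methods are dual to one another (the paper effectively uses that the average number of cells around a vertex in the projected tiling is $\tfrac{2n_i}{n_i-2}$, which is equivalent to your angle-sum identity), but your formulation has two practical advantages: it does not require verifying that the projected family $\mathcal P$ actually forms a global face-to-face planar tiling, and it handles the non-face-to-face case uniformly via the slack term $l(x)\pi$, whereas the paper argues this separately by subdividing edges and checking that $4$-belt pieces can only be shared by $4$ or $2$ cells. One small cosmetic point: you use $l(x)$ both for the face-incidence indicator and in $dl(x)$ for arc-length; renaming one of them would avoid ambiguity.
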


\begin{proof}
First, we assume that $M$ is face-to-face and that $P$ is a cell of $M$.
Let us choose an edge $E$ of $P$. Let $\mathcal{E}$ contain the edges $E'$ of $\M$ for which there is a sequence of cells $P_1, \ldots, P_k$ such that
\begin{enumerate}
\item[(i)] for any $i=1,2,\ldots,k-1$, $P_i \cap P_{i+1}$ is a face containing an edge that is a translate of $E$;
\item[(ii)] $E$ is an edge of $P_1$ and $E'$ is an edge of $P_k$.
\end{enumerate}
Furthermore, let $\mathcal{P}$ be the family of cells containing edges from $\mathcal{E}$.

Note that the projection of the members of $\mathcal{P}$ in the direction of $E$ onto a plane $H$ is a translative, convex, edge-to-edge tiling of $H$. By the second property of parallelohedra in the list in the beginning of Section~\ref{sec:prelim}, the projections of the elements of $\mathcal{P}$ are either parallelograms, in which every vertex belongs to exactly four parallelograms, or they are centrally symmetric hexagons, in which case every vertex belongs to three hexagons. Thus, a cell contains $4$ or $6$ translates of any given edge, and in the first case every edge belongs to exactly $4$ cells, and in the second case every edge belongs to exactly $3$ cells.

Now, we assign a weight $w_E$ to each edge $E$ of $M$; we set $w_E = \frac{1}{k}$, where $k$ is the number of cells $E$ belongs to.
Let $N_R$ denote the number of cells of $\M$, and $\mathcal{E}_R$ denote the family of the edges of the cells contained in $R \BB^3$.
As $N_R = \vol_3(R \BB^3)+ \mathcal{O}(R^2)$, we have
\[
L(\skel_1 (\M) \cap R \BB^3) = \sum_{E \in \mathcal{E}_R} L(E)  + \mathcal{O}(R^2) = N_r \sum_{i=1}^6 w_i |p_i| + \mathcal{O}(R^2),
\]
implying Lemma~\ref{lem:weights} for face-to-face mosaics.

Finally, if $\M$ is a not necessarily face-to-face tiling by translates of $P$, then we may divide each edge into finitely many pieces in such a way that the relative interior points on each piece belong to the same cells. It is easy to see that any part of an edge in a $6$-belt belongs to exactly $3$ cells, and a part of an edge in a $4$-belt belongs to either $4$ or $2$ cells. Thus, repeating the procedure, the weighted sum of the edge lengths in $R \BB^3$, defined as in the previous case, yields a value for $\underline{\rho}_1(M)$ asymptotically not less than that for a face-to-face mosaic generated by $P$.
\end{proof}

\section{The proof of Theorem~\ref{thm:2}}\label{sec:thm2}

First, for each value of $i$ and $m=(\alpha_4,\alpha_6)$, we find the infimum of $w_m(P)$ of the type (i) parallelohedra for each $i$ separately, which we denote by $w_m^i$, then compare these values.

\noindent
\textbf{Case 1}: $P$ is a type (1) parallelohedron.\\
Then $P$ is a parallelopiped, and it is easy to see (e.g. by the AM-GM inequality) that $w_m(P)$ is minimal if and only if $P$ is a unit cube, and in this case $w_m(P) = 3 \alpha_4$. Thus, $w_m^1 = 3 \alpha_4$.

\noindent
\textbf{Case 2}: $P$ is a type (2) parallelohedron.\\
Then $P$ is a centrally symmetric hexagon-based prism. By Cavalieri's principle, we can assume that $P$ is a right prism, and by the Discrete Isoperimetric Inequality, we may assume that its base is a regular hexagon. Let the edge length of the base be $a$, and the length of the lateral edges be $b$. Then $\vol_3(P) = \frac{3\sqrt{3}}{2}a^2 b$, and $w_m(P) = 3 \alpha_4 a + \alpha_6 b$. An elementary computation shows that if $\frac{3\sqrt{3}}{2}a^2 b = 1$, then $w_m(P) \geq \frac{3^{7/6} \alpha_4^{2/3} \alpha_6^{1/3}}{2^{1/3}}$, with equality if and only if $a= \frac{2^{2/3} \alpha_6^{1/3}}{3^{5/6} \alpha_4^{1/3}}$ and $b=\frac{3^{1/6} \alpha_4^{2/3}}{2^{1/3} \alpha_6^{2/3}}$. Hence, $w_m^2 = \frac{3^{7/6} \alpha_4^{2/3} \alpha_6^{1/3}}{2^{1/3}}$.

\noindent
\textbf{Case 3}: $P$ is a type (3) parallelohedron.\\
In this case $P$ is generated by $4$ segments, and each segment generates a $6$-belt. Thus, $w_m(P) = \frac{\alpha_6}{2} w(P)$, where $w(P)$ denotes the mean width of $P$ (see the sentence after Theorem~\ref{thm:2}). This means that we may apply the proof of \cite[Theorem 1]{Langi} (see also Table 1 of \cite{Langi}), and obtain that $w_m(P) \geq 3^{1/2} 2^{2/3} \alpha_6$, with equality if and only if $P$ is a regular rhombic dodecahedron, implying, in particular, that $w_m^3 = 3^{1/2} 2^{2/3} \alpha_6$.

\noindent
\textbf{Case 4}: $P$ is a type (4) parallelohedron.

First, note that if $\alpha_4 > \alpha_6$, then, with the notation $m'=(\alpha_6,\alpha_6)$, $w_m^4 \geq w_{m'}^4 > w_{m'}^5 = w_m^5$, where the last inequality follows from \cite[Theorem 1]{Langi}. In the following we assume that $\alpha_4 \leq \alpha_6$. 

To prove Theorem~\ref{thm:2} in this case, we use the notation introduced in Section~\ref{sec:prelim}: we represent $P$ in the form
\[
P= \sum_{1 \leq i < j \leq 4} [o,\beta_{ij} v_i \times v_j]
\]
for some  $v_1, v_2, v_3, v_4 \in \Re^3$ satisfying $\sum_{i=1}^4 v_i = o$, and for some $\beta_{ij} \geq 0$, with exactly one $\beta_{ij}$ equal to zero. By our assumptions, $|V_{ijk}| = 1$ for any $\{i,j,k \} \subset \{ 1,2,3,4\}$, where $V_{ijk}$ denotes the determinant with $v_i,v_j,v_k$ as columns, and we assumed that $V_{123}=1$, which implies, in particular, that $V_{124}=-1$.
Then $T=\conv \{ v_1, v_2, v_3, v_4 \}$ is a centered tetrahedron, with volume $\vol_3(T)=\frac{2}{3}$.

Without loss of generality, we set $\beta_{34} = 0$. Then, among the edges of $P$, the translates of $\beta_{12} v_1 \times v_2$ belong to a $4$-belt, and the translates of all other edges belong to $6$-belts. Thus, we have
\[
w_m(P) = \alpha_4 \beta_{12} | v_1 \times v_2| + \alpha_6 \sum_{i \in \{1,2 \}, j \in \{ 3,4 \} }\beta_{ij} |v_i \times v_j|, \hbox{ and}
\]
\[
\vol_3(P) = f(\beta_{12}, \beta_{13},\ldots, \beta_{24}, 0).
\]

We follow the two steps of the proof of \cite[Theorem 1]{Langi}. Our first step is to find the minimum of $w_m(P)$ in the affine class of $P$. For this, we use Lemma~\ref{lem:isotropic}, proved by Petty \cite{Petty} (see also (3.4) in \cite{Giannopoulos} for arbitrary convex bodies).

\begin{lem}\label{lem:isotropic}
Let $P \subset \Re^n$ be a convex polytope with outer unit facet normals $u_1, \ldots, u_k$. Let $F_i$ denote the $(n-1)$-dimensional volume of the $i$th facet of $P$. Then, up to congruence, there is a unique volume preserving affine transformation $A$ such that the surface area $\surf(A(P))$ is maximal in the affine class of $P$. Furthermore, $P$ satisfies this property if and only if its surface area measure is isotropic, that is, if
\begin{equation}\label{eq:isotropic}
\sum_{i=1}^k \frac{n F_i}{\surf(P)} u_i \otimes u_i = \Id
\end{equation}
where $\Id$ denotes the identity matrix.
\end{lem}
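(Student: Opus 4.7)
The approach I would take is Lagrange multipliers: parametrize volume-preserving linear maps by $B := A^{-T}$ (so $|\det B| = 1$), compute $\surf(A(P))$ as an explicit function of $B$, and show that the Euler--Lagrange equation for critical points of this function on $\{|\det B| = 1\}$ is exactly the isotropy condition (\ref{eq:isotropic}). Translation invariance of surface area makes reduction to the linear part harmless.

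\emph{Step 1: how surface area transforms.} For $A \in \mathrm{GL}(n)$ with $|\det A| = 1$, the image of the hyperplane $\{\langle x, u_i\rangle = c_i\}$ is the hyperplane $\{\langle y, A^{-T}u_i\rangle = c_i\}$, so the image facet has outer unit normal $v_i = Bu_i/|Bu_i|$. A short slab-thickness argument gives the new facet $(n-1)$-volume: the slab of normal thickness $\delta$ above the $i$-th facet has $n$-volume $\delta F_i$, and its image is a slab of the same total volume but thickness $\delta/|Bu_i|$ in direction $v_i$, forcing the new facet area to be $F_i' = |Bu_i|\,F_i$. Summing over facets,
\[
\surf(A(P)) \;=\; \Phi(B) \;:=\; \sum_{i=1}^{k} F_i\, |Bu_i|.
\]

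\emph{Step 2: critical points are isotropic.} I would next compute the matrix derivatives
\[
\nabla_B \Phi \;=\; \sum_{i} F_i\, \frac{(Bu_i)\, u_i^T}{|Bu_i|}, \qquad \nabla_B \log|\det B| \;=\; B^{-T},
\]
so a Lagrange critical point of $\Phi$ under the constraint $\log|\det B|=0$ satisfies $\sum_i F_i (Bu_i) u_i^T / |Bu_i| = \lambda B^{-T}$ for some multiplier $\lambda$. Multiplying on the right by $B^T$ and substituting $F_i' = F_i |Bu_i|$, $v_i = Bu_i/|Bu_i|$, this becomes
\[
\sum_{i=1}^{k} F_i'\, v_i v_i^T \;=\; \lambda\, \Id.
\]
Taking the trace fixes $\lambda = \surf(A(P))/n$, and the identity is then precisely (\ref{eq:isotropic}) applied to $A(P)$. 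So being a critical point of $\Phi$ on the constraint surface is equivalent to $A(P)$ being in isotropic position.

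\emph{Step 3: existence and uniqueness (the main obstacle).} Because $\Phi(UB) = \Phi(B)$ for every $U \in \mathrm{O}(n)$, the functional descends through $B \mapsto M = B^T B$ to the cone $\mathcal{S}_1 := \{M \succ 0 : \det M = 1\}$, on which it reads $\tilde\Phi(M) = \sum_i F_i \sqrt{\langle Mu_i, u_i\rangle}$. The hypothesis that $P$ is full-dimensional ensures the $u_i$ span $\Re^n$, so any sequence $M_j \in \mathcal{S}_1$ with an eigenvalue tending to $\infty$ (forced by $\det M_j = 1$) makes at least one $\langle M_j u_i, u_i\rangle$ diverge; this coercivity yields attainment of the extremum. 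The hard part is the uniqueness-up-to-congruence clause, which reduces to uniqueness of the critical point on $\mathcal{S}_1$. I expect this to be most cleanly handled by a second-variation analysis along the one-parameter curves $M_t = M_0^{1/2} \exp(tS) M_0^{1/2}$ with $S$ symmetric and traceless: the spanning of the $u_i$ should force the Hessian of $\tilde\Phi$ to be definite in every direction transverse to the $\mathrm{O}(n)$-orbit, yielding the required uniqueness modulo congruence.
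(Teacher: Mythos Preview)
The paper does not give its own proof of this lemma; it is quoted from Petty \cite{Petty} (with a pointer to Giannopoulos--Papadimitrakis for general convex bodies), so there is no in-paper argument to compare against. Your approach---the transformation law $\surf(A(P))=\sum_i F_i\,|A^{-T}u_i|$, Lagrange multipliers on $\{|\det B|=1\}$, and coercivity on $\mathcal S_1=\{M\succ 0:\det M=1\}$---is essentially the standard proof in those references, and Steps~1 and~2 are correct as written.

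Two remarks. First, the statement as printed says the isotropic position \emph{maximizes} surface area, but this is a slip: surface area is unbounded above in the affine class (stretch a cube), and your own coercivity argument in Step~3 shows exactly that $\tilde\Phi\to\infty$ at the boundary of $\mathcal S_1$. The correct statement, and the one the paper actually uses downstream when it minimizes $w_m(P)=\surf(Q)$ over an affine class, is that isotropy characterizes the \emph{minimum}. Your proof proves this correct version. Second, your Step~3 uniqueness plan can be made precise: at a critical point (without loss of generality $M_0=\Id$) the second variation along $M_t=\exp(tS)$, $\operatorname{tr}S=0$, is
\[
\left.\frac{d^2}{dt^2}\right|_{t=0}\tilde\Phi(M_t)=\sum_i F_i\Bigl(\tfrac12\,u_i^{T}S^2u_i-\tfrac14\,(u_i^{T}Su_i)^2\Bigr)\;\ge\;\frac{\surf(P)}{4n}\,\|S\|_F^2\;>\;0,
\]
using the isotropy identity for the first term and Cauchy--Schwarz $(u_i^{T}Su_i)^2\le u_i^{T}S^2u_i$ for the second. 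Thus every critical point is a nondegenerate local minimum on the contractible manifold $\mathcal S_1\cong\Re^{n(n+1)/2-1}$; together with coercivity this forces uniqueness (two strict local minima would yield a mountain-pass saddle). This is exactly the ``definite Hessian'' you anticipated.
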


Any convex polytope satisfying the conditions in (\ref{eq:isotropic}) is said to be in \emph{surface isotropic position}.
We note that the volume of the projection body of any convex polyhedron is invariant under volume preserving linear transformations (cf. \cite{Petty2}).
On the other hand, from Cauchy's projection formula and the additivity of the support function (see \cite{Gardner}) it follows that the projection body of the polytope in Lemma~\ref{lem:isotropic} is the zonotope $\sum_{i_1}^k [o,F_i u_i]$ (for an elementary explanation of this formula, see \cite[Theorem 1.1]{BGK}). Note that the solution to Minkowski's problem \cite[Theorem 8.2.2]{Schneider} states that some unit vectors $u_1, \ldots, u_k \in \Re^n$ and positive numbers $F_1, \ldots, F_k$ are the outer unit normals and volumes of the facets of a convex polytope if and only if the $u_i$s span $\Re^n$, and $\sum_{i=1}^k F_i u_i = o$. This yields that there is an $o$-symmetric convex polyhedron $Q$ whose faces have outer unit normals $\pm \frac{v_1 \times v_2}{|v_1 \times v_2|}$, and $\pm \frac{v_i \times v_j}{|v_i \times v_j|}$ with $ij=13,23,14,24$, and these faces have area $\frac{1}{2} \alpha_4 \beta_{12} | v_1 \times v_2|$ and $\frac{1}{2} \alpha_6 \beta_{ij} |v_i \times v_j|$ with $ij=13,14,23,24$, respectively.
Since $u \otimes u = (-u) \otimes (-u)$ for any $u \in \Re^n$, we may apply Lemma~\ref{lem:isotropic} for $Q$, and obtain that
\begin{equation}\label{eq:isotropic_w}
\frac{\alpha_4 \beta_{12}}{|v_1 \times v_2|} (v_1 \times v_2) \otimes (v_1 \times v_2) + \alpha_6 \sum_{i \in \{1,2 \}, j \in \{ 3,4 \} } \frac{2\beta_{ij}}{|v_i \times v_j|} (v_i \times v_j) \otimes (v_i \times v_j) = \frac{w_m(P)}{3} \Id,
\end{equation}
which in the following we assume to hold for $P$.

Recall that $x \otimes y = x y^T$ and $\langle x,y \rangle = x^T y$ for any column vectors $x,y \in \Re^n$.
Hence, multiplying both sides in (\ref{eq:isotropic_w}) by $v_3^T$ from the left and $v_4$ from the right, it follows that
\[
\langle v_3, v_4 \rangle = \frac{3 \alpha_4 \beta_{12}}{w_m(P) |v_1 \times v_2|} V_{123} V_{124}.
\]
Since $V_{123} = - V_{124} = 1$, we have
\[
\beta_{12} = - \langle v_3,v_4 \rangle |v_1 \times v_2| \cdot \frac{w_m(P)}{3 \alpha_4}.
\]
Multiplying both sides in (\ref{eq:isotropic_w}) with some other $v_i$s, we obtain that 
for any $i \in \{ 1,2\}$ and $j \in \{ 3,4 \}$, and with the notation $\{ i,j,s,t \} = \{ 1,2,3,4\}$, we have
\[
\beta_{ij} = - \langle v_s,v_t \rangle |v_i \times v_j| \cdot \frac{w_m(P)}{3 \alpha_6},
\]
and obtain similarly that $\langle v_1, v_2 \rangle = 0$.
 
Now we set $\bar{\beta}_{ij} = - \langle v_s, v_t \rangle |v_i \times v_j|$ for all $\{ i,j,s,t \} = \{ 1,2,3,4\}$, and note that
$\beta_{12}= \frac{w_m(P) \bar{\beta}_{12}}{3\alpha_4}$, $\beta_{34}= \zeta_{34}=0$, and for any $ij \neq 12, 34$, $\beta_{ij}= \frac{w_m(P) \bar{\beta}_{ij}}{3\alpha_6}$.
Substituting back these quantities into the formulas for $\vol_3(P)$ and $w_m(P)$ and simplifying, we may rewrite our optimization problem in the following form:
find the maximum of $\frac{27 \vol_3(P)}{\left(w_m(P) \right)^3} = f\left(\frac{\bar{\beta}_{12}}{\alpha_4}, \frac{\bar{\beta}_{13}}{\alpha_6}, \ldots, \frac{\bar{\beta}_{34}}{\alpha_6}, 0 \right)$, where $\bar{\beta}_{ij}$ is defined as above, in the family of all centered tetrahedra $T$ with $\vol_3(T)=\frac{2}{3}$, under the condition that $\sum_{1 \leq i < j \leq 4} \bar{\beta}_{ij} |v_i \times v_j| = 3$. Here, it is worth noting that the last condition is satisfied for any centered tetrahedron $T$ with volume $\frac{2}{3}$ by Lemma~\ref{lem:tetrahedron_identities}, and thus, it is redundant.
Note that since $f$ is $3$-homogeneous, we have
\[
f\left(\frac{\bar{\beta}_{12}}{\alpha_4}, \frac{\bar{\beta}_{13}}{\alpha_6} \ldots, \frac{\bar{\beta}_{24}}{\alpha_6}, 0 \right) =
\frac{1}{\alpha_6^3} f\left(\mu \bar{\beta}_{12}, \bar{\beta}_{13}, \ldots, \bar{\beta}_{24}, 0 \right),
\]
where $\mu = \frac{\alpha_6}{\alpha_4} \geq 1$.

In the second step we give an upper bound for the values of this function.
Set $\gamma_{ij} = - \langle v_s, v_t \rangle$ and $\zeta_{ij}= \gamma_{ij} |v_i \times v_j|^2$ for all $\{i,j,s,t\} = \{ 1,2,3,4\}$, and note that this yields $\gamma_{34}=\zeta_{34}=0$.
To find an upper bound, we apply the Cauchy-Schwartz Inequality, which states that for any real numbers $x_i,y_i$, $i=1,2,\ldots, k$, we have
$\sum_{i=1}^k x_i y_i \leq \sqrt{\sum_{i=1}^k x_i^2} \sqrt{\sum_{i=1}^k y_i^2}$, with equality if and only if the $x_i$s and the $y_i$s are proportional.
To do this, we write each member of $f$ not containing $\bar{\beta}_{12}$ as the product $\bar{\beta}_{ij} \bar{\beta}_{kl} \bar{\beta}_{mn} = \sqrt{\gamma_{ij} \gamma_{kl} \gamma_{mn}} \sqrt{\zeta_{ij} \zeta_{kl} \zeta_{mn}}$, and each member containing $\bar{\beta}_{12}$ as $ \mu \bar{\beta}_{12} \bar{\beta}_{kl} \bar{\beta}_{mn} = \sqrt{\gamma_{12} \gamma_{kl} \gamma_{mn}} \sqrt{\mu^2 \zeta_{12} \zeta_{kl} \zeta_{mn}}$. Thus, we obtain that
\[
f(\mu \bar{\beta}_{12},\bar{\beta}_{13},\ldots,\bar{\beta}_{24},0) \leq \sqrt{f(\mu^2 \zeta_{12},\zeta_{13},\zeta_{14},\zeta_{23},\zeta_{24},0)} \cdot
\sqrt{f(\gamma_{12},\gamma_{13},\gamma_{14},\gamma_{23},\gamma_{24},\gamma_{34})}.
\]
Here we use the fact that by Lemma~\ref{lem:tetrahedron_identities}, $f(\gamma_{12}, \ldots, \gamma_{34}) = 1$ for all centered tetrahedra with volume $\frac{2}{3}$.
Furthermore, observe that by Lemma~\ref{lem:tetrahedron_identities} we have $\sum_{1 \leq i \leq j \leq 4} \tau_{ij} = 3$ for all such tetrahedra. Hence, 
by Lemma~\ref{lem:computations2}, we have
\[
f(\mu^2 \zeta_{12}, \zeta_{13}, \ldots, \zeta_{24},0) \leq \frac{16 \mu^6}{(4\mu^2 - 1)^2},
\]
implying that if $\vol_3(P) = 1$, then
\[
w_m(P) \geq \frac{3 \alpha_4^{1/3} (4 \alpha_6^2 - \alpha_4^2)^{1/3}}{2^{2/3}}.
\]
Consequently, we have
\begin{equation}\label{eq:type4result}
w_m^4 \geq \left\{
\begin{array}{l}
w_m^5, \hbox{ if } \alpha_4 > \alpha_6;\\
\frac{3 \alpha_4^{1/3} (4 \alpha_6^2 - \alpha_4^2)^{1/3}}{2^{2/3}}, \hbox{ if } \alpha_4 \leq \alpha_6,
\end{array}
\right.
\end{equation}
and equality in the first case does not occur.

\textbf{Case 5}: $P$ is a type (5) parallelohedron.
In this case every generating segment generates a $6$-belt, and we can apply \cite[Theorem 1]{Langi} directly, and obtain that
as $\vol_3(P)=1$,
\[
w_m(P) \geq \frac{3}{2^{1/6}} \alpha_6,
\]
with equality if and only if $P$ is a regular truncated octahedron.

Table~\ref{tab:wmP} collects our lower bounds for $w_m(P)$ for the different types of parallelohedra. Note that if $i \neq 4$, $w_m^i$ coincides with the expression in the $i$th row and second column of Table~\ref{tab:wmP}, whereas $w_m^4$ is not less than the quantity in the $4$th row and the second column.

\begin{table}[h]
\begin{tabular}{>{\centering}m{0.1\textwidth}||>{\centering}m{0.25\textwidth}|>{\centering}m{0.5\textwidth}}
Type & $w_m^i$ & Optimal parallelohedra\tabularnewline
\hline \hline
(1) & $3\alpha_4$ & unit cube\tabularnewline
\hline
(2) & $\frac{3^{7/6}}{2^{1/3}} \alpha_4^{2/3} \alpha_6^{1/3} $ & regular hexagon based right prism, with base and lateral edges of lengths $\frac{2^{2/3} \alpha_6^{1/3}}{3^{5/6} \alpha_4^{1/3}}$ and $\frac{3^{1/6} \alpha_4^{2/3}}{2^{1/3} \alpha_6^{2/3}}$, respectively \tabularnewline
\hline
(3) & $2^{2/3} \cdot 3^{1/2} \alpha_6 $ & regular rhombic dodecahedron with edge length $\frac{\sqrt{3}}{2^{4/3}}$\tabularnewline
\hline
(4) & $\geq \frac{3 \alpha_4^{1/3} (4 \alpha_6^2 - \alpha_4^2)^{1/3}}{2^{2/3}} $, or $> w_m^5$ & not known \tabularnewline
\hline
(5) & $\frac{3}{2^{1/6}} \alpha_6$ & regular truncated octahedron of edge length $\frac{1}{2^{7/6}}$
\end{tabular}
\caption{Minima of $w_m(P)$ for different types of parallelohedra $P$}
\label{tab:wmP}
\end{table}

In the final step we determine the minimum of $\{ w_m^i : i=1,2,3,4,5\}$ as a function of $\alpha_4$ and $\alpha_6$.

\begin{figure}[ht]
\begin{center}
\includegraphics[width=0.5\textwidth]{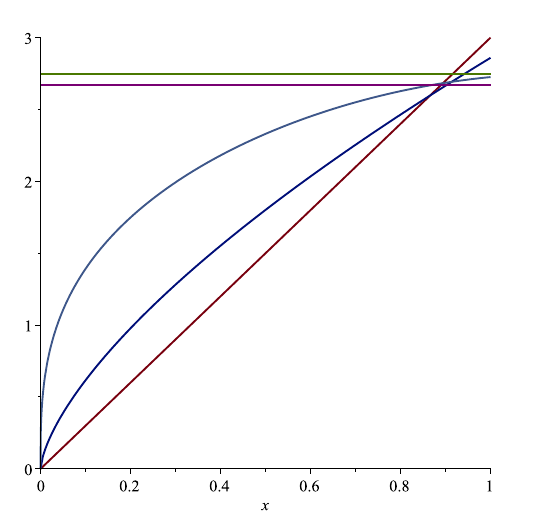}
\caption{The lower bounds for $w_m^i$ from the second column of Table~\ref{tab:wmP} as functions of $\alpha_4$, with $\alpha_6 = 1$, and $0 < \alpha_4 \leq 1$.}
\label{fig:graphs}
\end{center}
\end{figure}

An elementary computation shows that $w_m^3 > w_m^5$ for all $\alpha_4,\alpha_6 > 0$, and that
$w_m^4 \geq \frac{3 \alpha_4^{1/3} (4 \alpha_6^2 - \alpha_4^2)^{1/3}}{2^{2/3}} > \min \{ w_m^1, w_m^5 \}$ if
$0 < \alpha_4 \leq \alpha_6$. Furthermore, if $\alpha_4 > \alpha_6$, then $w_m^4 > w_m^5$.
Thus, we need to compare only the values of $w_m^1$, $w_m^2$ and $w_m^5$, and from this another elementary computation proves the statement.


\begin{thebibliography}{99}

\bibitem{Abe63} O. Aberth, \emph{An isoperimetric inequality for polyhedra and its application to an extremal problem}, Proc. London Math. Soc. \textbf{3} (1963), 322-336.

\bibitem{Abe73} O. Aberth, \emph{An inequality for convex polyhedra}, J. London Math. Soc. \textbf{2} (1973), 382-384.

\bibitem{Bezdek} K. Bezdek, \emph{Sphere packings revisited}, European J. Combin. \textbf{27}, (2006), 864-883.

\bibitem{Be57} A.S. Besicovitch and H.G. Eggleston, \emph{The total length of the edges of a polyhedron}, Q. J. Math. \textbf{8} (1957), 172-190.

\bibitem{BDKS} E. Bongiovanni, A. Diaz, A. Kakkar, and N. Sothanaphan, \emph{The least-area tetrahedral tile of space}, Geom. Dedicata \textbf{205} (2020), 51-93.

\bibitem{BR02} L. Bowen and C. Radin, \emph{Densest packing of equal spheres in hyperbolic space}, Discrete Comput. Geom. \textbf{29} (2002), 23-39.

\bibitem{BR04} L. Bowen and C. Radin, \emph{Optimally dense packings of Hyperbolic Space}, Geom. Dedicata \textbf{104} (2004), 37-59.

\bibitem{BGK} T. Burger, P. Gritzmann and V. Klee, \emph{Polytope projection and projection polytopes}, Amr. Math. Montly \textbf{103} (1996), 742-755.

%\bibitem{BrMcI} S. Brazitikos and F. McIntyre, \emph{Vector-valued Maclaurin inequalities}, Commun. Contemp. Math. \textbf{24}(06) (2022), 2150044.

%\bibitem{Boroczky} K. B\"or\"oczky, \emph{Closest packing and loosest covering of the space with balls}, Stud. Sci. Math. Hung. \textbf{21} (1986), 79–89.

%\bibitem{Bezdek1} K. Bezdek, E. Dar\'oczy-Kiss and K.J. Liu, \emph{Voronoi polyhedra in unit ball packings with small surface area}, Period. Math. Hungar. \textbf{39} (1999), 107-118.

%\bibitem{Bezdek2} K. Bezdek, \emph{On rhombic dodecahedra}, Beitr\"age Algebra Geom. \textbf{41} (2000), 411-416.

%\bibitem{Bezdek3} K. Bezdek, \emph{On a stronger form of Rogers’ lemma and the minimum surface area of Voronoi cells in unit ball packings}, J. reine angew. Math. \textbf{518} (2000), 131–143.

%\bibitem{BezdekLangi} K. Bezdek and Z. L\'angi, \emph{Density bounds for outer parallel domains of unit ball packings}, Proc. Steklov Inst. Math. \textbf{288} (2015), 209-225.

%\bibitem{Daroczy} E. Dar\'oczy-Kiss, \emph{On the minimum intrinsic $1$-volume of Voronoi cells in lattice unit sphere packings}, Period. Math. Hungar. \textbf{39} (1999), 119-123.

%\bibitem{Delone} B. Delone, \emph{Sur la partition r \'eguli\`ere de l’espace \`a $4$ dimensions}, Bulletin de l’Acad\'emie des Sciences de l’URSS. VII s\'erie, 1929, no. 1, 79–110.

\bibitem{Dolbilin} N.P. Dolbilin, \emph{Properties of faces of parallelohedra}, Proc. Steklov Inst. Math. \textbf{266} (2009), 105–119.

\bibitem{Fedorov} E.S. Fedorov, \emph{Elements of the study of figures (in Russian)}, Zap. Mineralog. Obsch., 1885.

\bibitem{Edmonds} A.L. Edmonds, \emph{Sommerville's missing tetrahedra}, Discrete Comput. Geom. \textbf{37} (2007) 287-296.
                
\bibitem{LFT} L. Fejes T\'oth, \emph{\"Uber das k\"urzeste Kurvennetz, das eine Kugeloberfl\"ache in fl\"achengleiche konvexe Teil zerlegt}, Math. Naturwiss. Anz. Ungar. Akad. Wiss. \textbf{62} (1943), 349–354.

%\bibitem{Filliman} P. Filliman, \emph{Extremum problems for zonotopes}, Geom. Dedicata \textbf{27} (1988), 251–262.

\bibitem{FejesToth} L. Fejes T\'oth, \emph{On the total length of the edges of a polyhedron}, Norsk. Vid. Selsk. Forh. Trondhjem \textbf{21} (1948), 32-34.

\bibitem{Lagerungen} L. Fejes T\'oth, G. Fejes T\'oth and W. Kuperberg, \emph{ Lagerungen: arrangements in the plane, on the sphere, and in space}, Springer Cham, 2023.

\bibitem{GGHMMPW} P. Gallagher, W. Ghang, D. Hu, Z. Martin, M. Miller, B. Perpetua, and S. Waruhiu, S., \emph{Surface-area minimizing $n$-hedral tiles}, Rose-Hulman Undergrad. Math. J. \textbf{15}(1) (2014), paper 13

\bibitem{Gabbrielli} R. Gabbrielli, \emph{A new counter-example to Kelvin's conjecture on minimal surfaces}, Phil. Mag. Lett. \textbf{89}(8) (2009), 483–491. 

%\bibitem{Garber} A. Garber and A. Magazinov, \emph{Voronoi conjecture for five-dimensional parallelohedra}, arXiv:1906.05193 [math.CO], 12 June, 2019.

\bibitem{Gardner}, R.J. Gardner, \emph{Geometric tomography}, Encyclopedia of Mathematics and its Applications \textbf{58}, Cambridge University Press, Cambridge, 2006.

\bibitem{Giannopoulos} A. Giannopoulos and M. Papadimitrakis, \emph{Isotropic surface area measures}, Mathematika \textbf{46} (1999), 1-13.

\bibitem{GrunShep} B, Gr\"unbaum and G.C. Shephard, \emph{Tilings and patterns}, New York: W.H. Freeman, New York, 1987.

\bibitem{Hales1} T.C. Hales, \emph{The honeycomb conjecture}, Discrete Comput. Geom. \textbf{25} (2001), 1–22.

%\bibitem{Hales2} T.C. Hales, \emph{A proof of the Kepler conjecture}, Ann. Math. \textbf{162} (2005), 1065–1185.

%\bibitem{Hales3} T.C. Hales, \emph{Dense sphere packings: a blueprint for formal proofs}, Cambridge University Press, Cambridge, 2012.

%\bibitem{Hales4} T.C. Hales and S. McLaughlin, \emph{The dodecahedral conjecture}, J. Amer. Math. Soc. \textbf{23} (2010), 299-344.

\bibitem{JL} A. Jo\'os and Z. L\'angi, \emph{Isoperimetric problems for zonotopes}, Mathematika \textbf{69} (2023), 508-534.

\bibitem{Kelvin} Lord Kelvin (Sir William Thomson), \emph{LXIII. On the division of space with minimum partitional area}, The London, Edinburgh, and Dublin Philosophical Magazine and Journal of Science \textbf{24} (1887), 503-514.

\bibitem{Kusner} R. Kusner and J. Sullivan, \emph{Comparing the Weaire-Phelan equal-volume foam to Kelvin's foam} Forma \textbf{11}(3) (1996), 233-242. 

\bibitem{Langi} Z. L\'angi, \emph{An isoperimetric problem for three-dimensional parallelohedra}, Pacific J. Math. \textbf{316} (2022), 169–181

%\bibitem{Linhart} J. Linhart, \emph{Extremaleigenschaften der reguliren $3$-Zonotope}, Studia Sci. Math. Hungar. \textbf{21} (1986), 94-98
%\bibitem{McMullen} P. McMullen, \emph{Volumes of projections of unit cubes}, Bull. London Math. Soc. \textbf{16} (1984), 278-280.

\bibitem{LW2025} Z. L\'angi and S. Wang, \emph{The Honeycomb Conjecture in normed planes and an alpha-convex variant of a theorem of Dowker}, arXiv:2406.10622, DOI:10.48550/arXiv.2406.10622

\bibitem{Melzak} Z.A. Melzak, \emph{Problems connected with convexity}, Canad. Math. Bull. \textbf{8} (1965), 565-573.

\bibitem{Melzak2} Z.A. Melzak, \emph{An isoperimetric inequality for tetrahedra}, Canad. Math. Bull. \textbf{9} (1966), 667-669.

\bibitem{Morgan} F. Morgan, \emph{The hexagonal honeycomb conjecture}, Trans. Amer. Math. Soc. \textbf{351} (1999), 1753–1763.

\bibitem{Minkowski} H. Minkowski, \emph{Allgemeine Lehrs\"atze \"uber die convexen Polyeder},  G\"ott. Nachr. (1897), 198–219.

\bibitem{Oudet} \'E. Oudet, \emph{Approximation of partitions of least perimeter by $\Gamma$-convergence: around Kelvin’s conjecture}, Exp. Math. \textbf{20}(3) (2011), 260–270.

\bibitem{Petty} M. Petty, \emph{Surface area of a convex body under affine transformations}, Proc. Amer. Math. Soc, \textbf{12} (1961), 824-828.

\bibitem{Petty2} M. Petty, \emph{Projection bodies}, In: Proc. Colloq. Convexity, Copenhagen, 1965, (K\o benhavns Univ. Mat. Inst., 1965) 234-241.

\bibitem{Schneider} R. Schneider, \emph{Convex bodies: the Brunn-Minkowski theory}, Encyclopedia of Mathematics and its Applications \textbf{151}, Cambridge University Press, Cambridge, 2013.

\bibitem{Schulte} E. Schulte, \emph{Tilings}, In: Handbook of convex geometry, Vol. A, B, 899–932, North-Holland, Amsterdam, 1993. 

\bibitem{Senechal} M. Senechal, \emph{Which tetrahedra fill space?}, Math. Mag. \textbf{54} (1981), 227-243.

%\bibitem{Scott} R.C. Scott, \emph{Minimizing the mass of the codimension-two skeleton of a convex, volume-one polyhedral region}, Ph.D. thesis, Rice University, Houston TX, 2011.

%\bibitem{Shephard} G.C. Shephard, \emph{Combinatorial properties of associated zonotopes}, Canad. J. Math. \textbf{24} (1974), 302-331.

\bibitem{Varro} M.T. Varro, \emph{On Agriculture}, Loeb Classical Library, 1934.

\bibitem{Venkov} B.A. Venkov, \emph{On a class of Euclidean polyhedra}, Vestn. Leningr. Univ., Ser. Mat. Fiz., Khim. \textbf{9} (1954), 11–31.

\bibitem{Weaire} D. Weaire and R. Phelan, \emph{A counter-example to Kelvin's conjecture on minimal surfaces}, Phil. Mag. Lett. \textbf{69}(2) (1994), 107–110.

\end{thebibliography}
\end{document}